\setlist[itemize]{noitemsep,nolistsep}
\def\cD{\mathscr{D}}
\def\cA{\mathscr{A}}
\def\cF{\mathscr{F}}
\def\cO{\mathscr{O}}
\def\cP{\mathscr{P}}
\def\cH{\mathscr{H}}
\def\cE{\mathscr{E}}
\def\cS{\mathscr{S}}
\def\cN{\mathscr{N}}
\def\cK{\mathscr{K}}
\def\cT{\mathscr{T}}
\def\bv{\ensuremath{\mathbf v}}
\def\Z{{\bf Z}}
\def\C{{\bf C}}
\def\R{{\bf R}}
\def\Q{{\bf Q}}
\def\P{{\bf P}}
\def\phi{\varphi}
\DeclareMathOperator{\alb}{alb}
\DeclareMathOperator{\Br}{Br}
\DeclareMathOperator{\ch}{ch}
\DeclareMathOperator{\coh}{coh}
\DeclareMathOperator{\Db}{D\textsuperscript{\rm b}}
\DeclareMathOperator{\Hilb}{Hilb}
\def\Im{\mathop{\rm Im}\nolimits}
\DeclareMathOperator{\Kum}{Kum}
\DeclareMathOperator{\NS}{NS}
\DeclareMathOperator{\Pic}{Pic}
\DeclareMathOperator{\Stab}{Stab}
\def\eps{\varepsilon}
\def\isom{\simeq}
\def\cong{\isom}
\def\lra{\longrightarrow}
\def\llra{\hbox to 10mm{\rightarrowfill}}
\def\lllra{\hbox to 15mm{\rightarrowfill}}
\def\llla{\hbox to 10mm{\leftarrowfill}}
\def\lllla{\hbox to 15mm{\leftarrowfill}}
\DeclareMathOperator{\isomto}{\stackrel{{}_{\scriptstyle\sim}}{\to}}
\DeclareMathOperator{\isomlra}{\stackrel{{}_{\scriptstyle\sim}}{\lra}}
\newtheorem{lemm}{Lemma}[section]
\newtheorem{theo}[lemm]{Theorem}
\newtheorem*{MainThm*}{Main Theorem}
\newtheorem{coro}[lemm]{Corollary}
\newtheorem{prop}[lemm]{Proposition}
\theoremstyle{remark}
\newtheorem{defi}[lemm]{Definition}
\newtheorem{rema}[lemm]{Remark}
\newtheorem{exam}[lemm]{Example}
\def\@tocline#1#2#3#4#5#6#7{\relax
  \ifnum #1>\c@tocdepth 
  \else
    \par \addpenalty\@secpenalty\addvspace{#2}%
    \begingroup \hyphenpenalty\@M
    \@ifempty{#4}{%
      \@tempdima\csname r@tocindent\number#1\endcsname\relax
    }{%
      \@tempdima#4\relax
    }%
    \parindent\z@ \leftskip#3\relax \advance\leftskip\@tempdima\relax
    \rightskip\@pnumwidth plus4em \parfillskip-\@pnumwidth
    #5\leavevmode\hskip-\@tempdima
      \ifcase #1
       \or\or \hskip 1em \or \hskip 2em \else \hskip 3em \fi%
      #6\nobreak\relax
    \dotfill\hbox to\@pnumwidth{\@tocpagenum{#7}}\par
    \nobreak
    \endgroup
  \fi}
\def\id{\mathsf{id}}
\title[Complete curves of polarized K3 surfaces and hyper-K\"ahler manifolds]{Complete curves in the moduli space of polarized K3 surfaces and hyper-K\"ahler manifolds}
\begin{document}

\author{Olivier Debarre}
\address{\parbox{0.9\textwidth}{Universit\'e de Paris and Sorbonne Universit\'e\\[1pt]
CNRS, IMJ-PRG\\[1pt]
F-75013 Paris, France\vspace{1mm}}}
\email{{olivier.debarre@imj-prg.fr}}

\author{Emanuele Macr\`i}
\address{\parbox{0.9\textwidth}{Universit\'e Paris-Saclay\\[1pt]
CNRS, Laboratoire de Math\'ematiques d'Orsay\\[1pt]
Rue Michel Magat, B\^at. 307, 91405 Orsay, France
\vspace{1mm}}}
\email{{emanuele.macri@universite-paris-saclay.fr}}

\subjclass[2020]{14D20, 14F08, 14J28, 14J42, 14J60}
\keywords{Moduli spaces, stability conditions, K3 surfaces, hyper-K\"ahler manifolds, complete families}
\thanks{This work was partially supported by the ERC Synergy Grant ERC-2020-SyG-854361-HyperK}

\begin{abstract}
Building on an idea of Borcherds,  Katzarkov,  Pantev, and Shepherd-Barron (who treated the case $e=14$), we prove that the moduli space of polarized K3 surfaces of degree $2e$ contains complete curves for all $e\geq 62$ and for some sporadic lower values of $e$ (starting at~$14$).
We also construct complete curves in the moduli spaces of polarized hyper-K\"ahler manifolds of $\mathrm{K3}^{[n]}$-type or $\mathrm{Kum}_n$-type for all $n\ge 1$ and polarizations of various degrees and divisibilities.
\end{abstract}

\maketitle

\tableofcontents
\setcounter{tocdepth}{1}

\begin{flushright}
{\em To the memory of Alberto Collino}
\end{flushright}


\section{Introduction}\label{sec:intro}

Let $\cF^0_{2e}$ be the ($19$-dimensional irreducible quasi-projective) moduli space of {\em polarized} K3 surfaces of degree $2e$.\
In~\cite[Theorem~1.3]{bkps}, the authors prove that $\cF^0_{2}$ is affine, hence contains no complete curves.\
In~\cite[Section~3]{bkps}, they construct a complete curve in $\cF^0_{28}$.\
Their idea is to start from a nonisotrivial family of polarized abelian surfaces, which exists because the moduli space of polarized abelian surfaces has a small boundary in its Satake compactification, and construct a suitable polarization on the associated family of Kummer surfaces (which needs to be ample on {\em all} Kummer surfaces in the family).\
Using the same construction, we prove the following extension of their results, which partially answers a question asked in \cite[\S~6.3.4]{ben}.

\begin{theo}\label{main}
For each integer $e\ge 62$ or in the set
\[
\{14,18, 26, 28, 29, 32, 34, 36, 38, 40, 42, 44, 45, 46, 47,   49, 50, 53, 54, 56, 57, 59, 60\},
\]
there exists a complete curve in $\cF^0_{2e}$ or, equivalently, a nonisotrivial smooth family of  K3 surfaces with a relative polarization of degree $2e$.
\end{theo}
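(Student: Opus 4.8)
The plan is to carry out the Kummer construction recalled above \cite{bkps}, keeping the type of the abelian polarization as a free parameter so as to reach many values of $e$. First I would fix $d\ge 1$ and work in the moduli space $\cA_{(1,d)}$ of abelian surfaces with a polarization of type $(1,d)$, a three-dimensional quotient of the Siegel space $\mathfrak H_2$. As recalled in the introduction, its Satake compactification has boundary of dimension at most one, hence of codimension at least two; so a sufficiently general complete curve $B$ in a projective model avoids the boundary, lies in the interior $\cA^0_{(1,d)}$, and can be taken nonisotrivial. This yields a smooth nonisotrivial family $\cA\to B$ of $(1,d)$-polarized abelian surfaces.

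Next I would blow up the relative $2$-torsion (a $16$-section) in $\cA$, lift the fibrewise involution, and quotient to obtain a smooth family $\cX\to B$ of Kummer K3 surfaces. Two classes on $\cX$ are at my disposal. The sum $E=\sum_{i=1}^{16}E_i$ of the exceptional $(-2)$-curves is invariant under monodromy --- which in general only permutes the individual $E_i$ --- so it defines a global class with $E^2=-32$; and the polarization descends (after passing to a symmetric, possibly totally symmetric, representative) to a relatively nef class $\mathsf N$ with $\mathsf N\cdot E=0$. I would then seek a relative polarization $H=a\mathsf N-bE$. After fixing normalizations one has, schematically,
\[
H^2=2a^2d-32b^2,
\]
so the requirement $H^2=2e$ reads $e=a^2d-16b^2$; the positivity $H\cdot E_i=2b>0$ and $H\cdot\mathsf N>0$ dispose of the exceptional curves and of the curves pulled back from the abelian surface. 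The precise primitive generator, and the factors of two governed by the Kummer lattice and the parity of $d$, require bookkeeping that I suppress here.

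The crux is to make $H$ ample on \emph{every} fibre at once. On special members the N\'eron--Severi group jumps --- for instance when $A_t$ acquires extra endomorphisms --- and new $(-2)$-curves appear, so I cannot argue on the generic member alone. By Nakai--Moishezon it is enough to have $H^2>0$ together with $H\cdot\delta>0$ for every class $\delta$, $\delta^2=-2$, that could be effective on some Kummer fibre. Writing $\delta=\alpha\mathsf N-\sum_i\beta_i E_i$, the relation $\delta^2=-2$ bounds $\sum_i\beta_i$ in terms of $\alpha$ (Cauchy--Schwarz over the sixteen $E_i$, using the Kummer-lattice structure), and feeding this into $H\cdot\delta$ yields a sufficient, jump-robust inequality of the form $a^2d>\kappa b^2$ for an explicit constant $\kappa$ slightly larger than $16$. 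Establishing this uniform bound is the main obstacle, and it is what makes the problem nontrivial: ampleness pushes $a\sqrt d$ away from $b$, whereas hitting a prescribed $e=a^2d-16b^2$ constrains them.

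Granting such an ampleness lemma, the theorem reduces to an arithmetic problem with two kinds of constraints. Beyond the inequality $a^2d>\kappa b^2$, the class $H$ must be \emph{primitive} of square $2e$ for $[\cX_t,H_t]$ to be a point of $\cF^0_{2e}$; since half-sums such as $\tfrac12E$ lie in the Kummer lattice, $b$ may be half-integral, and primitivity imposes congruence conditions on $(a,b,d)$. For each $e$ in the stated list I would exhibit an admissible triple $(d,a,b)$, and I expect the clause ``all $e\ge 62$'' to follow from a covering argument showing that every sufficiently large integer is represented by $a^2d-16b^2$ under these conditions, the freedom in $d$ providing the needed flexibility while the sporadic small values are verified by hand. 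Finally, the resulting morphism $B\to\cF^0_{2e}$ is nonconstant: a Kummer surface determines its abelian surface (its transcendental lattice is that of $A_t$, rescaled), so the nonisotriviality of $\cA\to B$ propagates to $\cX\to B$. As $B$ is complete, its image is the desired complete curve in $\cF^0_{2e}$.
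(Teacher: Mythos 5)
There is a genuine gap, and it sits exactly where you restricted yourself to monodromy-invariant classes. If the only globally defined divisor classes on your family $\cX\to B$ are the pullback $\mathsf N$ of the abelian polarization and the full sum $E=\sum_{i=1}^{16}E_i$ (together with its half), then every relative polarization you can form is $L=a\mathsf N-\tfrac{c}{2}E$ with $a,c\in\Z$. Note also that your schematic degree formula is off by a factor of $2$: since $\alpha=\pi_*\eps^*$ doubles intersection numbers, one has $\mathsf N^2=2H_A^2=4d$, not $2d$. Hence
\[
L^2=4a^2d-\tfrac{c^2}{4}\cdot 32=4a^2d-8c^2 ,
\]
so $e=\tfrac12L^2=2a^2d-4c^2$ is \emph{even}, no matter how you choose $d$. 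The theorem, however, claims complete curves in $\cF^0_{2e}$ for many odd $e$ (e.g.\ $29,45,47,49,53,57,59$ and every odd $e\ge 63$), so the ``arithmetic problem'' you defer to at the end has no solution in half of the cases: no amount of cleverness with $(a,b,d)$ can fix a parity obstruction. The missing idea is to \emph{kill} the monodromy on the sixteen $(-2)$-curves rather than work around it. The paper pulls the complete curve back to the fine moduli space $\cA_2(m)$ of principally polarized surfaces with full level-$m$ structure, $m$ even, $m\ge 4$: since $m$ is even, the $2$-torsion is given by sixteen sections, so each $E_i$ is individually defined over the whole base. One may then use ample classes $aH-\sum_{i}a_iE_i$ with sixteen \emph{unequal} (and suitably half-integral) coefficients, and the degree-realization problem turns into representing integers by $2a^2-\sum_{i=1}^{15}a_i^2$, resp.\ $a^2-\sum_{i=1}^{15}\binom{a_i}{2}$, under the ampleness constraint $a>a_1+a_2+a_3+a_4$; this is handled by variants of Lagrange's four-square and Gauss's three-triangular-number theorems plus finite computer checks, and it is this flexibility (not the parameter $d$, which the paper fixes to $1$) that produces all $e\ge 62$ and the sporadic list.

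Two further points. First, even within even degrees your plan is not established: you would need every large even $e$ to be of the form $2a^2d-4c^2$ with $c\ge 1$, $\gcd(a,c)=1$ and $a>2c$, i.e.\ $\tfrac{e}{2}+2c^2$ must admit a square divisor $a^2$ with $a>2c$ prime to $c$; this is not a standard representation theorem, and your ``covering argument'' is exactly the part that would need a proof. Second, a curve $B$ in the \emph{coarse} space $\cA_{(1,d)}$ does not by itself carry a family $\cA\to B$; one must pass to a finite cover (a level structure) to get a fine moduli space — and once you do that, the individual $E_i$ become global anyway, which is precisely the paper's construction. Your ampleness step, by contrast, is fine in outline: the Nakai--Moishezon plus Cauchy--Schwarz/Hodge-index argument you sketch is essentially the paper's Proposition~\ref{prop1}, and it is jump-robust for exactly the reason you give.
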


As explained in Remark~\ref{rem}, one can make the construction quite explicit and obtain for example, for infinitely many values of $e$ (including $e=14$), complete rational curves in $\cF^0_{2e}$  defined over $\Q$.\ The Picard numbers of  the corresponding  K3 surfaces are  $19$ or $20$.

It is plausible that $\cF^0_{2e}$ contain complete curves for all $e\ge 2$.\ The situation is very different for the moduli space 
$\cF_{2e}$ of {\em quasi-polarized} K3 surfaces of degree $2e$:\footnote{The moduli space  $\cF^0_{2e}$ is an open subset of $\cF_{2e}$ which is the complement in the period space of a Heegner divisor; this Heegner divisor has one or two irreducible components depending on whether $e\not\equiv 1\pmod4$ or $e\equiv 1\pmod4$.} because of the existence of its Baily--Borel projective compactification with one-dimensional boundary,~$\cF_{2e}$ contains complete subvarieties of dimension $17$.\ This is known to be the maximal possible dimension (\cite[Corollary~4.3]{gk}).

In the last part of the paper, we construct complete curves in the moduli spaces of polarized hyper-K\"ahler manifolds of $\mathrm{K3}^{[n]}$-type and $\Kum_n$-type by using moduli space techniques as in~\cite{muk1,muk2}.\
Given a K3 or  abelian surface (or more generally a smooth and proper CY2 category) with a Bridgeland stability condition and a Mukai vector, there is a natural polarization on the corresponding moduli space~\cite{bama1} of stable objects which behaves well in families.\
Hence, the question of finding a suitable polarization on the moduli space translates into the question of understanding stable objects, which can be studied effectively by using techniques from~\cite{bama2,myy,yy,yos3}.

The final result is less complete than what can be obtained with the approach described above in the surface case, but it covers more cases.\
We prove the existence of a complete curve in $\cF^0_{2e}$ for $e\in\{18,32,36,50,54\}$ (see Example~\ref{ex:PolarizedK3}; we could not obtain these values cannot be obtained directly with the previous technique), while in higher dimensions, our results imply the following two theorems  for Hilbert schemes of points on K3 surfaces and generalized Kummer varieties (for more general statements, see Propositions~\ref{prop:AmpleHilbert} and~\ref{prop:AmpleKummer}). 

In what follows, we denote by $T$ a quasi-projective scheme and we let $n\geq2$.

\begin{theo}\label{thm:Hilb}
Let $\cF \to T$ be a smooth family of K3 surfaces with a relatively ample divisor~$H_{\cF}$.\ Let $H_{\cF,n}$ be the big and nef divisor on $\Hilb^n(\cF/T)$ induced by $H_{\cF}$ via the symmetric product.\ Assume that the class of the divisor on $\Hilb^n(\cF/T)$ parameterizing nonreduced subschemes is divisible by 2 and let  $\delta_{\cF}$ be a half.\
For  all positive integers $a,b$ such that $a > b \sqrt{(n-1)^2+4(n-1)}$, the divisor
\[
a  H_{\cF,n} - b  \delta_{\cF}
\]
is relatively ample on $\Hilb^n(\cF/T) \to T$.
\end{theo}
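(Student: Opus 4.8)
The plan is to reduce the relative statement to a fibrewise one and then to a numerical positivity computation on a single Hilbert scheme $S^{[n]}$, where $S$ is a smooth projective K3 surface and $H:=H_\cF|_S$ is ample. Since $H_\cF$ is relatively ample, $\Hilb^n(\cF/T)\to T$ is projective and flat, and a line bundle on such a family is relatively ample if and only if its restriction to every geometric fibre is ample; as the bound $a>b\sqrt{(n-1)(n+3)}$ is uniform in the fibre, it suffices to show that $aH_n-b\delta$ is ample on each $S^{[n]}$, where $H_n:=H_{\cF,n}|_{S^{[n]}}$ and $\delta:=\delta_\cF|_{S^{[n]}}$. Here $H_n=\phi^*A$, with $\phi\colon S^{[n]}\to\Sym^n(S)$ the Hilbert–Chow morphism and $A$ the ample class induced by $H$; thus $H_n$ is nef, big and semiample, and is trivial exactly on the $\phi$-contracted curves, while $-\delta$ is $\phi$-relatively ample. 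On a $\phi$-contracted curve $C$ (where $H_n\cdot C=0$) positivity is immediate, since $(aH_n-b\delta)\cdot C=-b\,\delta\cdot C>0$ for $b>0$; so the entire content lies in the non-contracted curves.

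For those I would use the ample-cone criterion for projective hyper-Kähler manifolds: writing $q$ for the Beauville–Bogomolov form (so $q(H_n)=H^2\ge2$, $q(\delta)=-2(n-1)$, and $H_n\perp\delta$), the class $aH_n-b\delta$ is ample as soon as it lies in the positive cone and pairs strictly positively with every wall (MBM) curve class. By the wall-crossing analysis for the Mukai vector $v=(1,0,1-n)$ of ideal sheaves of length $n$ (\cite{bama2}, using the polarisation of \cite{bama1}), together with the Mukai isometry $H^2(S^{[n]},\Z)\cong v^\perp\subset\widetilde H(S)$ sending $(0,H,0)\mapsto H_n$ and $(1,0,n-1)\mapsto\pm\delta$, every such wall is cut out by a spherical or isotropic class $w=(r,D,s)$ with $D\in\NS(S)$. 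Pairing $aH_n-b\delta$ with the associated extremal curve $R_w$ unwinds (after orienting $R_w$ to be effective) to an inequality of the shape
\[
a\,(H\cdot D)\;>\;b\,\big((n-1)r+s\big),\qquad H\cdot D>0,\ (n-1)r+s>0,
\]
subject to the validity constraints $w^2=D^2-2rs\ge-2$ and $0<(v,w)=(n-1)r-s\le v^2/2=n-1$.

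It then remains to bound the right-hand side uniformly. When $D^2>0$ the Hodge index theorem on $S$ gives $(H\cdot D)^2\ge H^2D^2\ge 2D^2=2(2rs-2)$, while when $D^2\le0$ one has $H\cdot D\ge1$; feeding these two estimates into the constraints above, a short computation shows $(n-1)(n+3)\,(H\cdot D)^2\ge\big((n-1)r+s\big)^2$ in every valid case, the extremal configuration being $r=s=1$, $D^2=0$, where $(n-1)r+s=n$ and indeed $\sqrt{(n-1)(n+3)}\ge n$. Hence $\sqrt{(n-1)(n+3)}\,(H\cdot D)\ge(n-1)r+s$, so the hypothesis $a>b\sqrt{(n-1)(n+3)}$ forces the displayed inequality on every wall; since it also places $aH_n-b\delta$ in the positive cone (because $(n-1)(n+3)>(n-1)/e$), the class is ample.

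The hard part is the final step: making the equivalence ``ample $\Leftrightarrow$ positive on all wall classes'' precise and, above all, establishing the estimate \emph{uniformly} over all K3 surfaces occurring in the family — in particular over fibres of arbitrarily large Picard rank, where new divisor classes $D$ enter the optimisation. This is exactly where the Hodge index inequality on $S$ and the validity bound $0<(v,w)\le n-1$ must be played against each other, and the constant $\sqrt{(n-1)^2+4(n-1)}=\sqrt{(n-1)(n+3)}$ emerges as the clean closed form that absorbs the worst configuration uniformly; it is not optimal, but it is the bound that survives passage to families through the Hodge index estimate alone.
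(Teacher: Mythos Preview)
Your approach is essentially the paper's: reduce to a single fibre, invoke the Bayer--Macr\`i wall description \cite[Theorem~12.1]{bama2} for the Mukai vector $\bv=(1,0,1-n)$, and bound the wall slopes via the Hodge index inequality on $S$ together with the constraints $\mathbf{a}^2\ge -2$ and $0<(\mathbf{a},\bv)\le n-1$; this is exactly Proposition~\ref{prop:AmpleHilbert} specialised to $r=1$, $c=0$, $B=0$. The one place where the paper is cleaner is your ``short computation'': rather than case-splitting and hunting for an extremal configuration, the paper uses the algebraic identity $(\mathbf{a},\boldsymbol{\delta})^2=\bv^2\widetilde{D}^2+\bigl((\mathbf{a},\bv)^2-\bv^2\mathbf{a}^2\bigr)$, which reduces the estimate to a single line once Hodge index is applied (and, incidentally, your ``extremal'' $r=s=1$, $D^2=0$ gives ratio $n<\sqrt{(n-1)(n+3)}$, so it does not saturate the stated bound---the bound comes from allowing $\mathbf{a}^2=-2$ and $(\mathbf{a},\bv)=n-1$ simultaneously in the identity, even if no integral class realises both).
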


In particular, given a complete curve in $\cF^0_{2e}$ and relatively prime positive integers $a,b$ such that    $a > b \sqrt{(n-1)^2+4(n-1)}$, after a finite base change, we obtain a complete curve in the moduli space of polarized hyper-K\"ahler manifolds of $\mathrm{K3}^{[n]}$-type of degree $2ea^2-2b^2(n-1)$ and divisibility $\gcd(a,2(n-1))$.

\begin{theo}\label{thm:Kum}
Let $\cA \to T$ be a smooth family of abelian surfaces with a relatively ample divisor~$H_{\cF}$.\ Assume that the class of the divisor on $\Kum_n(\cA/T)$ parameterizing nonreduced subschemes is divisible by 2 and let  $\delta_{\cA}$ be a half.\
For all positive integers $a,b$ such that $a > b(n+1)$, the divisor
\[
a  H_{\cA,n} - b  \delta_{\cA}
\]
is relatively ample on $\Kum_n(\cA/T) \to T$.
\end{theo}

The notation $H_{\cA,n}$  means, as before, the restriction  to $\Kum_n(\cA/T)$ of the corresponding divisor  on the Hilbert scheme.

In particular, given a complete curve of primitively polarized abelian surfaces of degree~$2d$ and positive integers $a,b$  such that $\gcd(a,b)=1$  and $a > b(n+1)$,  we obtain, after a finite base change, a complete curve in the moduli space of polarized hyper-K\"ahler manifolds of $\Kum_{[n]}$-type of degree $2da^2-2b^2(n+1)$ and divisibility $\gcd(a,2(n+1))$.

\subsection*{Acknowledgements}
We would like to thank Ignacio Barros, Daniel Huybrechts, Kieran O'Grady, and Claire Voisin for useful discussions and suggestions.\ Many thanks also to the referee who helped us improve the exposition.


\section{Ample classes on Kummer surfaces}\label{sec:AmpleKummer}

Let $A$ be an abelian surface and let $\eps\colon  \widehat A\to A$ be the blow up of the sixteen 2-torsion points of~$A$.\ Let $\Kum(A)=\widehat A/\pm1$ be the {\em Kummer surface} of $A$, with quotient map $\pi\colon \widehat A\to  \Kum(A)$, and let $E_1,\dots,E_{16}$ be the images   by $\pi$ of the exceptional curves of $\eps$; each~$E_i$ is a rational curve with self-intersection~$-2$.\ By~\cite[Propositions~VIII.(5.1) and~VIII.(5.2)]{bpv}, there is an injective morphism of Hodge structures
$$\alpha\coloneqq \pi_*\eps^* \colon H^2(A,\Z)\lra H^2(\Kum(A),\Z)$$
and it satisfies, for all $x,y\in H^2(A,\Z)$,
$$\alpha(x)\cdot \alpha (y)=2 x\cdot y.$$   
In particular, $\alpha (x)^2\in 4\Z$.\ Moreover, $\Im(\alpha)$ is the orthogonal complement in $H^2(\Kum(A),\Z)$ of the sublattice generated by (the classes of) $E_1,\dots,E_{16}$ (\cite[Corollary~VIII.(5.6)]{bpv}).

Any integral divisor class on $\Kum(A)$ can therefore be written as
\begin{equation}\label{dec}
L=a\alpha(N)-\sum_{i=1}^{16}a_iE_i,
\end{equation}
where $  N $ is primitive in $\NS(A)$ and $a,a_1,\dots,a_{16}\in \Q$.\ By taking the product with $E_i$, we get $a_i\in\frac12\Z$.\ By taking the product with $\alpha(x)$, where $x\in H^2(A,\Z)$ is such that $N\cdot x=1$, we get $a\in\frac12\Z$.\  
The $a_i$  are not necessarily integers: for example, the class $\frac12\sum_{i=1}^{16}E_i$ is integral.

Let $H_A$ be an ample divisor class on $A$ of square $2d$, for some $d\in\Z_{>0}$.\ The divisor class~$H\coloneqq \alpha(H_A)$ on  $\Kum(A)$ is  nef and big, but not ample; one has $H^2=4d$.

We will from now on drop $\alpha$ in~\eqref{dec}.\ 
We look for conditions for a class~$L$ as in~\eqref{dec} to be  ample.\
Assume $L^2>0$ and $0<H\cdot L=aH\cdot N$ (so for example,~$a>0$ and~$H\cdot N>0$), so that~$L$ is in the positive cone; we have then (\cite[Proposition~2.1.4]{huy}) an equivalence
\[
L \textnormal{ ample } \Longleftrightarrow\ L\cdot C>0 \textnormal{ for all irreducible curves $C\subset \Kum(A)$ with } C^2=-2. 
\]
The next proposition gives sufficient conditions for a class $L$ as in~\eqref{dec} to be ample.\
Its proof follows that of \cite[Proposition~4.4]{gs2} (itself identical to that of \cite[Proposition~3.4]{gs}), where similar results are proven under the additional assumption $\Pic(A)=\Z H_A$.

\begin{prop}\label{prop1}
On any Kummer surface, any rational  class
\[
L=aH- \sum_{i=1}^{16}a_iE_i,
\]
where  $a_1\ge \dots\ge a_{16}>0$ and such that $a>  a_{1}+a_{2}+a_{3}+a_{4}$, is ample.
\end{prop}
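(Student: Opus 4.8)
The plan is to show that $L$ meets every irreducible $(-2)$-curve $C$ positively, since by the criterion quoted just before the proposition (together with checking $L^2>0$ and $H\cdot L>0$), this suffices for ampleness. So I first record the positivity $L^2>0$: using $H^2=4d>0$, $E_i^2=-2$, $H\cdot E_i=0$, and $E_i\cdot E_j=0$ for $i\ne j$, one computes $L^2 = 4da^2 - 2\sum a_i^2$, and I would note this is positive under the stated hypotheses (this should follow from $a$ being large relative to the $a_i$; I would want to double-check that the hypothesis $a>a_1+a_2+a_3+a_4$ really forces $L^2>0$, or else assume $L^2>0$ as the displayed equivalence does). The condition $H\cdot L = aH\cdot N>0$ holds since $a>0$ and $H$ is nef and big on a surface with $N$ effective-ish; I would make $H\cdot N>0$ explicit.

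\textbf{The main step} is to analyze an arbitrary irreducible $(-2)$-curve $C$ and write its class as $C=c\,H-\sum_i c_i E_i$ in the same basis, with $c\in\tfrac12\Z$ and $c_i\in\tfrac12\Z$. From $C^2=-2$ I get the Diophantine-type relation
\[
4d\,c^2 - 2\sum_{i=1}^{16} c_i^2 = -2,
\]
and from $L\cdot C = 4d\,a c - 2\sum_i a_i c_i$ I want to prove this is positive. The key input will be bounding how the $c_i$ can be distributed. Two cases naturally arise: either $c=0$, in which case $C$ is (a component supported on) the $E_i$'s and $C^2=-2$ forces $C$ to be a single $E_i$ (so $c_i=1$ for one index and $0$ otherwise, giving $L\cdot C = 2a_i>0$); or $c\neq 0$, which is the hard case.

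\textbf{The hard part} is the case $c\neq 0$: I must control the numbers $c_i$ using intersection-theoretic positivity. The essential geometric fact, exploited in the Garbagnati–Sarti arguments this proof follows, is that since $C$ is irreducible and the $E_i$ are irreducible $(-2)$-curves distinct from $C$, one has $C\cdot E_i = 2c_i \ge 0$, so all $c_i\ge 0$; moreover $H\cdot C = 4dc>0$ forces $c>0$. Then I would bound $\sum a_i c_i$ from above by exploiting the ordering $a_1\ge\cdots\ge a_{16}>0$ together with a constraint on the $c_i$ coming from $C^2=-2$. Concretely, the relation $\sum c_i^2 = 2dc^2+1$ combined with half-integrality should limit how large the $c_i$ can be; the decisive point is that at most four of the $c_i$ can exceed $dc$ (or some similar threshold), which is exactly where the ``four largest'' combination $a_1+a_2+a_3+a_4$ enters. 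I would then estimate
\[
L\cdot C = 4d\,ac - 2\sum_i a_i c_i > 2\Bigl(2d\,c\,a - \sum_i a_i c_i\Bigr),
\]
and show the right-hand side is nonnegative by comparing $\sum a_i c_i$ against $a$ times the total ``weight'' $\sum c_i$ controlled by $2dc$, using that the contribution of indices beyond the top four is dominated. The main obstacle is making this counting argument tight: pinning down precisely why the hypothesis involves the \emph{sum of the four largest} $a_i$ (rather than, say, the single largest) requires a careful case analysis of the possible $(-2)$-classes on a Kummer surface, presumably tracing through the lattice-theoretic structure of $\NS(\Kum(A))$ as in \cite{gs,gs2}; I expect this combinatorial optimization — showing no configuration of the $c_i$ satisfying $\sum c_i^2 = 2dc^2+1$ can make $\sum a_i c_i$ large enough to violate positivity — to be the delicate core of the proof.
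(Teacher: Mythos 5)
Your decomposition of the $(-2)$-curve is where the argument breaks: you write $C = cH - \sum_i c_iE_i$, i.e., you assume the class of $C$ lies in $\Q H \oplus \bigoplus_i \Q E_i$. This is false on a general Kummer surface. The orthogonal complement of $\langle E_1,\dots,E_{16}\rangle$ in $H^2(\Kum(A),\Z)$ is the image of all of $H^2(A,\Z)$, so $\NS(\Kum(A))\otimes\Q$ contains the image of $\NS(A)\otimes\Q$, which has rank up to $4$; your ansatz is legitimate only when $\NS(A)=\Z H_A$, which is precisely the extra hypothesis of Garbagnati--Sarti that the proposition is designed to remove (the intended application is to complete families in which \emph{every} fiber, e.g.\ the Kummer surface of a product of elliptic curves, must receive an ample class). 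The paper instead writes $C = bM - \sum_i b_iE_i$ with $M\in\NS(A)$ primitive but otherwise arbitrary, and then uses a genuinely geometric input for which your sketch has no substitute: $2bM$ is the class of the effective divisor $\eps_*\pi^*C$ on $A$, hence $M^2\ge 0$ and $M^2$ is divisible by $4$ (the paper flags this as the only place where geometry, rather than lattice theory, enters). Without that fact the case analysis cannot even start.

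Moreover, the case your ansatz silently excludes is exactly the one that forces the hypothesis $a>a_1+a_2+a_3+a_4$. The paper's proof runs: Cauchy--Schwarz applied to $\sum_i a_ib_i$ together with the Hodge index inequality $(H\cdot M)^2\ge H^2M^2$ turn a hypothetical $L\cdot C\le 0$ into $b^2M^2<2$; divisibility then leaves only the cases $b=\tfrac12,\ M^2=4$ and $M^2=0$, in each of which the half-integer solutions of $\sum_i b_i^2=\tfrac32$ or $1$ are enumerated. The extremal configuration is $M^2=0$ with $b_1=\cdots=b_4=\tfrac12$, coming from elliptic curves on $A$; this is what yields the bound $a\le a_1+a_2+a_3+a_4$, and Remark~\ref{p0} shows it is sharp on a product of elliptic curves. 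In your rank-one setting $M=H$, so $M^2=4d\neq 0$ and this case does not exist---indeed, under $\Pic(A)=\Z H_A$ the hypothesis on $a$ can be weakened (Remark~\ref{p2}), so your ``delicate combinatorial core'' is aimed at the wrong target. Finally, even within that restricted setting you leave the key optimization as an expectation rather than a proof, and the mechanism you guess (at most four of the $c_i$ exceeding $dc$) is not the one that works; what works is the Cauchy--Schwarz/Hodge reduction just described. So the proposal has a genuine gap on two counts: the decomposition is invalid in the stated generality, and the hard case is not carried out.
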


\begin{proof}
We have
\begin{equation}\label{ha}
a^2>(a_1+a_2+a_3+a_4)^2=\sum_{1\le i,j\le 4}a_ia_j\ge  \sum_{i=1}^{16}a_i^2,
\end{equation}
hence
$$ L^2= 4 a^2d-2\sum_{i=1}^{16}a_i^2>(4d-2)a^2>0 $$
and $L\cdot H=4ad>0$, so that $L$ is in the positive cone.\ Consider now an irreducible curve $C$ with self-intersection $-2$; we must prove  $L\cdot C>0$.\ One can  write  
\begin{equation}\label{C}
    C=bM-\sum_{i=1}^{16}b_iE_i,
\end{equation}
with $b,b_1,\dots,b_{16}\in\frac12\Z$.\ Since $L\cdot E_i=2a_i>0$, we  may assume that $C$ is not one of the    $E_i$.\ We then have $b_i=\frac12 C\cdot E_i\ge 0$ and $0< H\cdot C=b H\cdot M$, so we may assume $b>0$ and  $H\cdot M >0$.\ 
Finally, the  integer $M^2$ is divisible by $4$ and nonnegative, because  $2bM$ is the class of the effective divisor  $\eps_*\pi^*C$  on $A$.\footnote{This is an essential point, and the only one where we use the geometry of the situation.\ The rest is just formal lattice-theoretic manipulations.}
  
Assume by way of contradiction $L\cdot C\le0$, that is,
\begin{equation}\label{hm}
abH\cdot  M -2\sum_{i=1}^{16}a_ib_i\le 0.
\end{equation}
Following \cite{gs}, we use the   inequalities
$$\Bigl( \sum_{i=1}^{16}b_i\Bigr)^2\le \Bigl(\sum_{i=1}^{16}a_i^2\Bigr)  \Bigl( \sum_{i=1}^{16}b_i^2\Bigr)\ \ \textnormal{ (Cauchy--Schwarz)} \quad,\quad (H\cdot  M)^2\ge H^2M^2\ \ \textnormal{ (Hodge)}.
$$
They imply
$$
\begin{aligned}
2= -C^2& =-b^2M^2+2\sum_{i=1}^{16}b_i^2\\
&\ge - b^2  M^2+\frac{2 \Bigl( \sum_{i=1}^{16}a_ib_i\Bigr)^2}{ \sum_{i=1}^{16}a_i^2} \qquad&\textnormal{(use Cauchy--Schwarz)}\\
&\ge  -b^2M^2+ \frac{a^2b^2 (H\cdot  M)^2}{2 \sum_{i=1}^{16}a_i^2}\qquad&\textnormal{(use~\eqref{hm})}\\
&\ge  -b^2 M^2 +\frac{a^2b^2 H^2M^2}{2 \sum_{i=1}^{16}a_i^2} \qquad&\textnormal{(use Hodge)}\\
&= b^2 M^2\biggl(\frac{2  a^2d}{ \sum_{i=1}^{16}a_i^2}-1\biggr).
\end{aligned}
$$
If $b^2M^2\ge 2$, we obtain $ a^2d\le\sum_{i=1}^{16}a_i^2 $, which contradicts~\eqref{ha}.

Assume now $b^2M^2< 2$.\ Since $M^2$ is divisible by $4$ and nonnegative and $b\in\frac12\Z_{>0}$, this leaves only two cases to be considered:
\begin{enumerate}[(A)]
\item   $b=\frac12$ and $M^2=4$.\ Using~\eqref{C} and $C^2=-2$, we obtain $\sum_{i=1}^{16}b_i^2=\frac34$.\ 
Since $b_i\in\frac12\Z_{\ge0}$, we  get  either $b_{i_1}=1$, $b_{i_2}=b_{i_3}=\frac12$, or $b_{i_1}=\dots=b_{i_6}=\frac12$, and the others vanish.\  Plugging that back into~\eqref{hm}, we get that $abH\cdot  M $ is at most $2a_{i_1}+a_{i_2}+a_{i_3}$ in the first case, and at most $a_{i_1}+\dots+a_{i_6}$ in the second case.\  The Hodge inequality also gives $bH\cdot M\ge \sqrt{dM^2}\ge 2$.\ Therefore, we get $a\le \frac{1}{2}(2a_{i_1}+a_{i_2}+a_{i_3})$ in the first case and  $a\le\frac{1}{2}(a_{i_1}+\dots+a_{i_6})$ in the second case, which contradicts  our  hypothesis on $a$.
\item $M^2=0$ (but $M\ne 0$).\ Using~\eqref{C} and $C^2=-2$, we get
$\sum_{i=1}^{16}b_i^2=1$, from which we deduce that either $b_{i_1}=1$, or $b_{i_1}=\dots=b_{i_4}=\frac12$, and the others vanish.\ 
Using~\eqref{hm}, we get that $abH\cdot  M $ is at most $2a_{i_1}$ in the first case, and at most $a_{i_1}+\dots+a_{i_4}$ in the second case.\ Since  the linear system $|H|$ is base-point-free and only contracts the curves  $E_1,\dots,E_{16}$, the linear system $|H-E_{i_1}|$ has no fixed divisor, hence is nef; in particular, $0\le (H-E_{i_1})\cdot C = bH\cdot M-2b_{i_1} $.\ Therefore, we get $a\le a_{i_1} $ in the first case and  $a\le a_{i_1}+\dots+a_{i_4} $ in the second case, which contradicts  our  hypothesis on $a$.
\end{enumerate}
We have therefore obtained  contradictions in all cases.\ This proves  $L\cdot C>0$ for all $(-2)$-curves~$L$, hence   $L$ is ample.
\end{proof}


\begin{coro}\label{prop2}
On any Kummer surface, the  class $aH- \sum_{i=1}^{16}E_i$
is ample for all~$a>4$, and nef for $a=4$.
\end{coro}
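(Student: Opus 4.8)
The plan is to obtain both assertions directly from Proposition~\ref{prop1}, which does all the real work. For the ampleness statement, I would simply specialize that proposition to the case $a_1=\dots=a_{16}=1$. The monotonicity and positivity hypotheses $a_1\ge\dots\ge a_{16}>0$ then hold trivially, and the crucial inequality $a>a_1+a_2+a_3+a_4$ reduces to exactly $a>4$. Thus Proposition~\ref{prop1} gives that $aH-\sum_{i=1}^{16}E_i$ is ample for every rational $a>4$, which is the first claim.

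For the threshold value $a=4$, I would not attempt to verify $(4H-\sum_{i=1}^{16}E_i)\cdot C\ge 0$ on each $(-2)$-curve $C$ by hand, but instead deduce nefness by a closure argument. The class $4H-\sum_{i=1}^{16}E_i$ is the limit in $\NS(\Kum(A))_\R$ of the classes $aH-\sum_{i=1}^{16}E_i$ as $a\to 4^+$ along rational values, and by the previous paragraph each of these is ample. Since the nef cone is closed (it is the closure of the ample cone), the limit lies in it, and hence $4H-\sum_{i=1}^{16}E_i$ is nef.

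I expect no serious obstacle: the corollary is essentially a clean specialization of Proposition~\ref{prop1} together with the standard fact that nefness is a closed condition. The only point worth flagging is that one should resort to the limiting argument for $a=4$ rather than re-running the inequalities of the proposition with equalities allowed; the latter is possible but would duplicate the earlier computation to no benefit.
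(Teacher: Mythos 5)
Your proof is correct and matches the paper's (implicit) argument: the corollary is stated as an immediate consequence of Proposition~\ref{prop1} with $a_1=\dots=a_{16}=1$, giving ampleness for rational $a>4$, and nefness at $a=4$ follows by the standard fact that the nef cone is the closure of the ample cone. Note that the paper's Remark~\ref{p0} confirms your limiting argument is the right one, since $4H-\sum_{i=1}^{16}E_i$ can indeed fail to be ample.
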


This seems to have been known, at least when $a$ is an integer, to the authors of \cite{bkps} (see their Section~3) but we could not find a published proof.\

\begin{rema}\label{p0}
Let $A_1$ and $A_2$ be elliptic curves.\ The ample class $H_A\coloneqq (A_1\times \{0\})+d(\{0\}\times A_2)$ on the abelian  surface $A\coloneqq A_1\times A_2$ induces a  polarization $H$ of degree $4d$ on the Kummer surface $\Kum(A)$.\ 
The image in $\Kum(A)$ of the proper transform $\eps^*(\{0\}\times A_2)- \sum_{i=1}^{4}E_i$ in~$\widehat A$ of the elliptic curve $\{0\}\times A_2$ is a  smooth rational curve with self-intersection $-2$ whose intersection number with $4H- \sum_{i=1}^{16}E_i$ is $0$.\
The nef class $4H- \sum_{i=1}^{16}E_i$   is therefore not ample on $\Kum(A)$.
\end{rema}

\begin{rema}\label{p2}
If we make the additional generality assumption $\Pic(A)=\Z H_A$, we can take \mbox{$M= H$} in the proof of  Proposition~\ref{prop1}.\ In particular, the case  $M^2=0$ needs not be considered and the hypotheses can be relaxed.\ One checks for example that when $d=1$, the class $aH-\sum_{i=1}^{16}E_i$ is ample for all  $a>3$.\footnote{Assume $a>3$.\ 
If $b\ge 2$, we  get the contradiction $2\ge b^2 M^2\bigl(\frac{2  a^2d}{ \sum_{i=1}^{16}a_i^2}-1\bigr)\ge 2a^2-16>2$.\ 
If  $b=\frac12$, we get $\sum_{i=1}^{16}b_i^2=\frac32$, hence $\sum_{i=1}^{16}b_i\le  3$, hence $a\le \frac{1}{2b}\sum_{i=1}^{16}b_i\le  3$.\ 
If  $b=1$, we get $\sum_{i=1}^{16}b_i^2=3$, hence $\sum_{i=1}^{16}b_i\le  6$, hence $a\le \frac{1}{2b}\sum_{i=1}^{16}b_i\le  3$.\ If  $b=\frac32$, we get $\sum_{i=1}^{16}b_i^2=\frac{11}2$, hence $\sum_{i=1}^{16}b_i\le  9$, hence $a\le \frac{1}{2b}\sum_{i=1}^{16}b_i\le  3$.}\ It is   classical that when $(A,H_A)$ is an indecomposable principally polarized abelian surface (that is, the Jacobian of a smooth curve), the integral degree-$8$ class $2H-\frac12\sum_{i=1}^{16}E_i$ is   very ample (\cite[p.~773--787]{gh}).\ See also \cite[Section~4]{gs2} for other results under the assumption $\Pic(A)=\Z H_A$.
\end{rema}


\section{Degrees of ample classes on Kummer surfaces}\label{sec:DegreeAmpleKummer}

Our aim in this section is to prove that all large enough even numbers can be realized as degrees of primitive ample   classes on Kummer surfaces.\ More precisely, we prove in Section~\ref{subsec:PrimitiveAmpleNot8} and Section~\ref{subsec:PrimitiveAmple4} the following result.

\begin{prop}\label{prop20}
Let $A$ be a principally polarized surface.\ For every   integer $e\ge 160$, or even $\ge 60$, or in the set $\{
14$, $26$, $28$, $34$, $40$, $42$, $44$, $46$, $53$, $56$, $79$, $97$, $101$, $103$, $107$, $109$, $113,119$, $125$, $131$, $135$, $137$, $139$, $143$, $145$, $149$, $151$, $155$, $157\}$, there exists a primitive ample class of degree $2e$ on $\Kum(A)$.
\end{prop}

In the remainder of this section, $(A,H_A)$ will be a principally polarized surface.\ The ample class~$H_A$ induces on the surface $\Kum(A)$ a nef class $H$  with self-intersection 4.
 
\subsection{Primitive ample classes of large degrees not divisible by 8}\label{subsec:PrimitiveAmpleNot8}

Keeping the   notation of Section~\ref{sec:AmpleKummer}, we first consider integral classes on $\Kum(A)$ of the form
$$L=aH- \sum_{i=1}^{16}a_iE_i ,$$
where $a,a_1,\dots,a_{16}$ are  integers, $a_1\ge \dots\ge a_{16}>0$, and $a>a_{1}+a_{2}+a_{3}+a_{4}$.\ By Proposition~\ref{prop1}, these classes are ample.\ 
We have
$$L^2=4a^2- 2\sum_{i=1}^{16}a_i^2  $$
and we need to examine which positive even integers $2e$ can be written in this way.\ Since we are looking for primitive classes, it will help to take $a_{16}=1$: we then have $L\cdot E_{16}=2$, hence either~$L$ is primitive, or it is divisible by $2$, in which case $L^2=2e$ must be divisible by $8$ (because the intersection pairing is even).\ It is easy to see that restricting to the case  $a_{16}=1$ does not narrow the search.
 
\begin{prop}\label{prop21}
Any integer $e\ge 163$ or in the set $\{34$,  $53$, $79$, $97$, $101$, $103$, $107$, $109$, $113,119$, $125$, $131$, $135$, $137$, $139$, $143$, $145$, $149$, $151$, $155$, $157$, $161\}$
can be written as
\begin{equation}\label{eq1}
e=2 a^2 -\sum_{i=1}^{15}a_i^2-1, 
\end{equation}
where $a,a_1,\dots,a_{15}$ are  integers, $a_1\ge \dots\ge a_{15}\ge1$, and $a> a_{1}+a_{2}+a_{3}+a_{4}$.
\end{prop}

\begin{proof}
We will first prove the result for all $ e\ge 1216$.\ We begin with a classical result (\cite[Theorem~5.6]{nz}; see also \cite[Sequence~A047700]{oeis}).
 
\begin{lemm}\label{l1}
Every integer $m\ge 34$ is the sum of five positive perfect squares.
\end{lemm}

\begin{lemm}\label{l2}
Every integer $m\ge 36$ is the sum of the squares of fifteen positive integers that are all $\le \sqrt{\frac{m}3-3}$.
\end{lemm}
 
\begin{proof}
Write $m=3n+r$, with $r\in\{0,1,2\}$.\ If $m\ge 102$, we have $n\ge 34$.\ By Lemma~\ref{l1}, we can write both $n$ and $n+1$ as the sum  of the squares of five positive  integers that are all $\le \sqrt{n-3}\le \sqrt{\frac{m}3-3}$.\ Adding up these decompositions, we obtain the required decomposition of $m$.

The remaining cases $36\le m\le 101$ can be checked  by computer (see~\ref{st1}).
\end{proof}

A consequence  of Lemma~\ref{l2} is that all integers $e+1$ between $2a^2-36$ and $2a^2-\lfloor\frac{3a^2+143}{16} \rfloor$ can be written as $e+1=2 a^2 -\sum_{i=1}^{15}a_i^2$, where the $a_i$ are integers such that 
$$
1\le a_i\le \sqrt{\frac{1}3\Bigl( \frac{3a^2+143}{16}\Bigr)-3}<\frac{a}{4}.
$$
In order to have no  gap when we go from $a$ to $a+1$, we need
$$
2(a+1)^2-\Big\lfloor\frac{3(a+1)^2+143}{16} \Big\rfloor\le 2a^2-35
$$
or, equivalently,
$$
\frac{3(a+1)^2+143}{16} \ge 4a+37,
$$
that is, $a\ge 26$.\
This means that all integers $e $ such that
$$
e+1\ge 2\cdot 26^2-\Big\lfloor\frac{3\cdot 26^2+143}{16} \Big\rfloor=1217
$$
can be written as in~\eqref{eq1} with $a>4a_1$, which is more than we need for Proposition~\ref{prop21}.\ 

The remaining cases can be checked by computer (see~\ref{st2}).
\end{proof}

\begin{coro}\label{cor25}
Let $A$ be a principally polarized surface.\
For all integers $e$ as in Proposition~\ref{prop21}, there exists a primitive ample class of degree $2e $ on $\Kum(A)$.
\end{coro}

\begin{proof}
Write the integer $e$   as in~\eqref{eq1}.\ 
By Proposition~\ref{prop1}, the class $aH- \sum_{i=1}^{15}a_iE_i-E_{16}$ is then ample on $\Kum(A)$, of degree $2e$.\ As explained earlier, the condition $4\nmid e$ ensures that it is primitive.
\end{proof}
 
We left out some even values of $e$ that can also be written as in~\eqref{eq1} because Corollary~\ref{cor29} will give another way to reach them.

\subsection{Primitive ample classes of large degrees  divisible by 4}\label{subsec:PrimitiveAmple4}

Keeping our  principally polarized surface $(A,H_A)$ and the same notation, recall that the class $\sum_{i=1}^{16}E_i$ is (uniquely) divisible by $2$ in $\NS(\Kum(A))$ (because it is the class of the branch locus of the double cover $\pi\colon \widehat A\to  \Kum(A)$).\  
In this section, we consider integral  classes of the form
$$L=aH- \sum_{i=1}^{16}\Bigl(a_i-\frac12\Bigr)E_i $$
where $a,a_1,\dots,a_{16}$ are  integers, $a_1\ge \dots\ge a_{16}\ge1$, and $a>a_{1}+a_{2}+a_{3}+a_{4}-2$.\ 
By Proposition~\ref{prop1}, these classes are ample.\ As before, we will take $a_{16}=1$, which will ensure that the class $L$ is primitive, because $L\cdot E_{16}$ is then equal to $1$.\ 
We have
\begin{align*}
L^2&=a^2H^2+\sum_{i=1}^{15}\Bigl(a_i-\frac12\Bigr)^2(-2)+\frac14(-2)\\
&=4a^2-2\sum_{i=1}^{15}\Bigl(a_i^2-a_i+\frac14\Bigr)^2-\frac12\\
&=4a^2-8-4\sum_{i=1}^{15}\binom{a_i}{2}
\end{align*}
and we need to examine which positive even integers can be written in this way.

\begin{prop}\label{prop26}
Every integer $n\ge 32$ or in the set $\{
9$, $15$, $16$, $22$, $23$, $24$, $25$, $30\}$ can be written as
\begin{equation}\label{eq7}
n=a^2 -\sum_{i=1}^{15}\binom{a_i}{2}, 
\end{equation}
where $a,a_1,\dots,a_{15}$ are  integers, $a_1\ge  \dots\ge a_{15}\ge1$, and $a> a_{1}+a_{2}+a_{3}+a_{4}-2$.
\end{prop}

\begin{proof}
We will first prove the result for all $n\ge 187$.\ Recall that a triangular number is an integer of the form $\binom{r}{2}$, with $r\in\Z$.
 
\begin{lemm}[Gauss' Eureka Theorem]\label{lem25}
Every nonnegative integer is the sum of three nonnegative triangular numbers.
\end{lemm}

\begin{lemm}\label{lem27}
Every integer $m\ge 24$ is the sum of fifteen nonnegative triangular numbers of the form $\binom{a}{2}$ with $1\le a\le \frac12+\sqrt{\frac{2m}5+\frac{33}4}$.
\end{lemm}
 
\begin{proof}
Write $m=5n+r$, with $r\in\{0,\dots,4\}$ and $n\ge0$.\
By Lemma~\ref{lem25}, we can write $n,\dots,n+4$ each as the sum  of three nonnegative triangular numbers~$\binom{a}{2}$.\ 
We have $\binom{a}{2}\le n+4\le\frac{m}{5}+4$, hence $a\le \frac12+\sqrt{\frac{2m}5+\frac{33}4}$.\ 
Adding up these decompositions, we obtain the required decomposition of $m$.
\end{proof}

A consequence of Lemma~\ref{lem27} is that all integers between $a^2$ and $a^2-\lfloor \frac{5a^2-661}{32} \rfloor$ can be written as in~\eqref{eq7}, where the $a_i$ are integers such that 
$$
1\le  a_i\le   \frac12+\sqrt{\frac{2}5\Bigl( \frac{5a^2-661}{32} \Bigr) +\frac{33}4}<\frac{a+2}{4}.
$$
In order to have no  gap when we go from $a$ to $a+1$, we need
$$
(a+1)^2-\Bigl\lfloor\frac{5(a+1)^2-661}{32} \Big\rfloor\le a^2+1
$$
or, equivalently,
$$
\frac{5(a+1)^2-661}{16}\ge 2a,
$$
that is, $a\ge 14$.\
This means that every integer
$$
n\ge  14^2-\Big\lfloor  \frac{5\cdot14^2-661}{32} \Big\rfloor=187 
$$
can be written as in~\eqref{eq7} with $a>4a_1-2$, which is more than we need.
 
The remaining cases where $n\le 186$  can be checked by computer (see~\ref{st3}).\ 
This proves Proposition~\ref{prop26}.
\end{proof}
 
\begin{coro}\label{cor29}
Let $A$ be a principally polarized surface.\ 
For every even  integer $e\ge 60$ or in the set $\{
14$, $26$, $28$, $40$, $42$, $44$, $46$, $56\}$, there exists a primitive ample class of degree $2e$ on~$\Kum(A)$.
\end{coro}

\begin{proof}
Write $n\coloneqq \frac12 e +2$  as  in Proposition~\ref{prop26}.\ 
By Proposition~\ref{prop1}, the primitive integral class 
$ aH- \sum_{i=1}^{15}\bigl(a_i-\frac12\bigr)E_i-\frac12 E_{16}$ 
is ample on $\Kum(A)$, of degree $2e$.\end{proof}

Proposition~\ref{prop20} is then just   Corollary~\ref{cor25} and  Corollary~\ref{cor29} put together.

\subsection{Ample classes of intermediate degrees}\label{sect23}

We keep  our  principally polarized surface $(A,H_A)$ and the same notation.\ 
Not only is the class $\sum_{i=1}^{16}E_i$ divisible by $2$ in $\NS(\Kum(A))$, but this is also true for  some sums of eight classes among the $E_i$.\footnote{\label{f2}This fact  can be explained geometrically as follows.\ Write $A=A'/\{0,\alpha\}$, where $A'$ is an abelian surface and $\alpha\in A'$ has order 2.\ The translation by~$\alpha$ on~$A'$ induces  an involution on $\Kum( A')$ whose fixed points are  the eight images in $\Kum( A')$ of the sixteen points $x\in A$ such that $2x=\alpha$.\ Blowing up these points, we get a double cover of $\Kum(A)$ branched along the   union of the eight $(-2)$-curves corresponding to the images in $A$ of the points $x$.\ The sum of these eight $(-2)$-curves is therefore divisible by $2$.} 

\noindent (M1) We may consider primitive integral  classes of the form
$$aH- \sum_{i=1}^{15}a_iE_i-\frac12E_{16}$$
where $a$ is an integer, $a_1,\dots,a_{15}\in \frac12\Z_{>0}$  and exactly seven of them are not  integers, $a_1\ge \dots\ge a_{15}>0$, and $a>a_{1}+a_{2}+a_{3}+a_{4}$ (\cite[Remark~2.3]{gs2}).\ 
By Proposition~\ref{prop1}, these classes are ample.\  
For new possible degrees $2e$, we get all values $95\le e\le 159$ and $e\in\{38$, $57$, $59$, $71$, $73$, $75$, $77$, $79$, $81$, $83$, $85 \}$ (see~\ref{st4}).
 
\noindent (M2) There are integral classes of the type $\frac12(H+  E_{i_1}+\dots+ E_{i_6})$ (see \cite[Theorem~2.7]{gs2}\footnote{When   the principally polarized surface $(A,H_A)$ is indecomposable, the image of $\Kum(A)$ by the morphism associated with the linear system $|H|$ is a quartic surface in $\P^3$ with sixteen nodes and sixteen everywhere tangent planes (classically called ``tropes''), each containing six nodes.\ Each trope gives rise to a conic with class of the type $\frac12( H-E_{i_1}-\dots- E_{i_6})$.\ When the surface $(A,H_A)$ is the product of  principally polarized elliptic curves  $A_1$ and $ A_2$, these sixteen conics become unions of copies  of $A_1$ and $A_2$ meeting transversely at one point (\cite[Section~3.1]{dole}). 
})
so we may also consider primitive integral  classes of the form
$$\Bigl(a+\frac12\Bigr)H- \sum_{i=1}^{15}a_iE_i-\frac12E_{16}$$
where $a$ is an integer, $a_1,\dots,a_{15}\in \frac12\Z_{>0}$  and exactly five of them are not  integers, $a_1\ge \dots\ge a_{15}>0$, and $a\ge a_{1}+a_{2}+a_{3}+a_{4}$.\ 
By Proposition~\ref{prop1}, these classes are ample.\ 
We get possible degrees $2e$ for  the new values
$
e\in\{29$, $ 45$, $  47$, $ 49$, $ 63$, $ 65$, $ 67$, $ 69$, $87$, $89$, $91$, $93 \}
$ (see~\ref{st5}).
 
All in all, in our list of possible degrees $2e$, we get for $e$ the values 
$$
14,26,28,29, 34, 38, 40, 42,44,45, 46, 47,   49, 53, 56, 57,  59, 60
$$
and all integers $e\ge 62$.\
The remaining values $e\in\{18,32,36,50,54\}$, which are needed for the proof below, will be covered later in Example~\ref{ex:PolarizedK3}.

\begin{proof}[Proof of Theorem~\ref{main}]
Let $T$ be an irreducible complete curve contained in the ($3$-dimensional) coarse moduli space $\cA_2$ of principally polarized abelian surfaces; such curves exist because the boundary of the ($3$-dimensional) projective Satake compactifications  has dimension $1$.
 
We fix $m$ even, $m\ge 4$, and  we consider the pullback $T'$ of $T$ in the  moduli space $\cA_2(m)$ of principally polarized   abelian surfaces with full level-$m$ structures.\ 
Since $m\ge 3$, this moduli space is fine hence the inclusion $T'\subset \cA_2(m)$ defines a family $\cA\to T'$ of abelian surfaces with a relative principal polarization $\cH_\cA$ on $\cA$; since $m$ is even, there are sixteen sections corresponding to the $2$-torsion points in the fibers.

Let $\eps\colon\widehat\cA\to \cA$ be the blow up of the images of these sixteen sections.\ 
Multiplication by~$-1$  on $\cA$  lifts to $\widehat\cA$.\ 
Let $\pi\colon \widehat \cA\to \cK\coloneqq \widehat \cA/\pm1$ be the quotient map and let $\cE\subset \widehat\cA$ be the image by $\pi$ of the exceptional divisor  $\eps^{-1}(\cA[2])$.\ 
The divisor $\cE$ is the branch locus of $\pi$ hence its class is divisible by~$2$ in $\NS(\cK)$; it splits as the sum of sixteen irreducible divisors $\cE_1,\dots,\cE_{16}$.\ 
The relative polarization $\cH_\cA $ lifts to a relative quasi-polarization on $ \widehat\cA \to T'$ which induces a relative quasi-polarization $\cH$ on $  \cK\to T'$.

By Proposition~\ref{prop20}, there are relative polarizations on~$\cK$ of  all even degrees $\ge 320$.\ 
Method~(M1) of Section~\ref{sect23} also applies: using the $T'$-isomorphism $\cA\isomto \Pic^0(\cA/T')$ given by the principal polarization $\cH_\cA $ and a section of $\cA\to T'$ of order $2$, one can construct a double \'etale covering $\cA'\to\cA$ and proceed as in  footnote~\ref{f2}.\ We obtain relative polarizations on~$\cK$ of  all even degrees~$\ge 190$.

Finally, to use method~(M2), one needs global halves of  classes of the type $\cH-\cE_1-\dots- \cE_{6}$.\ 
To achieve this, one needs to take a suitable ramified double cover of the base curve~$T'$, as explained in ~\cite[Section~3]{bkps}.
\end{proof}

\begin{rema}\upshape\label{rem}
One can be more precise concerning the nature of the complete curves that we constructed in $\cF^0_{2e}$.\ 
For example, in the proof above, we can take for $T$ a complete Shimura curve.\ 
More precisely, let  $Q$ be an indefinite quaternion algebra over $\Q$ of reduced discriminant~$D$ and consider principally polarized abelian surfaces whose endomorphism rings contain a maximal order of $Q$.\ 
Their Picard number is at least $3$ and their  locus in the moduli space $\cA_2$ of principally polarized abelian surfaces only depends on $D$ and is the finite union of images of projective Shimura curves (defined over $\Q$; see \cite[Proposition~4.3]{rot}).\ 
When $D\in\{6,10\}$, these loci are irreducible and the Shimura curves are isomorphic to $\P^1$ over~$\Q$ (\cite[Proposition~7.1]{rot}).\footnote{The abelian surfaces that they parametrize were   described in \cite[Theorem~1.3]{hamu} (see also \cite[Section~7]{rot})) as the  Jacobians of explicit   genus-2 curves.}

During the proof above, we needed to take a ramified cover $T'\to T$ in order to work with a family $\cA\to T'$ of abelian surfaces.\ 
When $t'\in T'$, the Kummer variety $\Kum(\cA_{t'})$ only depends on the  image $t$ of $t'$ in $T$, but the polarization that we construct depends in most cases on an ordering of the $(-2)$-curves   $E_1,\dots,E_{16}$.\ 
However, if we consider primitive ample classes of the type
$$L=aH- \frac{c}{2}\sum_{i=1}^{16}E_i,$$
where $a$ and $c$ are integers, the image of the pair $(\Kum(\cA_{t'}),L)$ will only depend on the point~$t$ and the modular map $T'\to \cF^0_{2e}$ will factor through $T$.

For $L$ to be primitive, we need $\gcd(a,c)=1$ and for ampleness, we need  $a>2c$ (Corollary~\ref{prop2}).\ 
It follows that in degrees $2e=4a^2-8c^2$, for all positive integers $a$ and $c$ subject to these two conditions, we obtain rational curves in $\cF^0_{2e}$. 
\end{rema}


\section{Ample classes on hyper-K\"ahler manifolds}\label{sec:HK}

In this section, we use moduli spaces of stable sheaves/complexes on twisted K3 or abelian surfaces, more generally on CY2 categories, to obtain complete families of polarized hyper-K\"ahler manifolds in any dimension, once we start from a complete family of polarized CY2 categories.\ Our tool is a minor generalization of~\cite{muk2}, together with techniques from~\cite{bama1,bama2}.\
We start by   giving in Section~\ref{subsec:twisted} a short review of twisted K3 and abelian surfaces, which provide our main sets of examples.\
Our main result, Theorem~\ref{thm:families}, is stated  in Section~\ref{subsec:Mukai} and is a direct rewriting of the results of~\cite{families}.\
Finally, we discuss various examples in Section~\ref{subsec:examples}.

\subsection{Twisted K3 and abelian surfaces}\label{subsec:twisted}

We give  a short review of twisted K3 and abelian surfaces, following \cite{HuSt}.\
Let $S$ be a complex K3 surface.\
We define its \emph{Brauer group} as
\[
\Br(S) \coloneqq  H^2(S,\cO_S^*)_{\mathrm{tors}}.
\]
It parameterizes equivalence classes of Azumaya algebras over $S$.

\begin{defi}\label{def:TwistedK3}
A \emph{twisted K3 surface} is a pair $(S,\alpha)$, where $S$ is a K3 surface and $\alpha\in\Br(S)$.\
We denote by $\coh(S,\alpha)$ the abelian category of $\alpha$-twisted coherent sheaves on $S$ and by $\mathrm{D}^\mathrm{b}(S,\alpha)$ its bounded derived category.
\end{defi}

Using the exponential sequence on $S$, we see that for any $\alpha\in\Br(S)$, there exists a \emph{B-field} $B\in H^2(S,\Q)$ such that
\[
\exp(B^{0,2}) = \alpha.
\]
Such a B-field is unique modulo $H^2(S,\Z)$ and $\NS(S)\otimes\Q$.

\begin{defi}\label{def:HodgeStructure}
Let $(S,\alpha)$ be a twisted K3 surface and let $B\in H^2(S,\Q)$ be a B-field.\
We define a weight-2 polarized Hodge structure $\widetilde{H}(S,B,\Z)$ on the cohomology $H^*(S,\Z)$, endowed with the Mukai pairing, by setting
\[
\widetilde{H}^{2,0}(S,B) \coloneqq  \C  \exp(B)[\eta] = \C  \left( [\eta] + B\wedge [\eta] \right),
\]
where $[\eta]\in H^2(S,\C)$ is the class of a nondegenerate holomorphic 2-form $\eta$ on $S$.\
We denote by $H_{\mathrm{alg}}^*(S,B,\Z)$ its $(1,1)$-part, given by
\[
H_{\mathrm{alg}}^*(S,B,\Z) \coloneqq  \widetilde{H}^{1,1}(S,B) \cap H^*(S,\Z).
\]
\end{defi}

This Hodge structure does not depend  on the choice of $B$, up to noncanonical isomorphism.\
Fixing $B$,  we also have, by~\cite[Proposition 1.2]{HuSt}, a well-defined notion of Mukai vector
\[
v^B=\sqrt{\mathrm{td}_S}\cdot\ch^B\colon K(S,\alpha)\lra H_{\mathrm{alg}}^*(S,B,\Z),
\]
where $K(S,\alpha)$ denotes the Grothendieck group of the abelian category $\coh(S,\alpha)$.

\begin{exam}\label{ex:untwisted}
If $\alpha$ is trivial,   we can choose $B=0$  and   the Hodge structure $\widetilde{H}(S,B,\Z)$ coincides with the usual Mukai Hodge structure on $H^*(S,\Z)$ and the Mukai vector with the usual Mukai vector.
\end{exam}

Mukai's moduli theory works on twisted K3 surfaces  as in the untwisted case.\
We summarize the main results as follows.

Let $(S,\alpha)$ be a twisted K3 surface.\
We denote by $\Stab^\dagger(\Db(S,\alpha))$ the distinguished connected component of the space of Bridgeland stability conditions on $\Db(S,\alpha)$, with respect to the algebraic Mukai lattice $H_{\mathrm{alg}}^*(S,B,\Z)$, described in~\cite{bri2} (see also~\cite{hms1} for the twisted case).\ 
The original definition is in~\cite{bri1}; here we use~\cite[Definition~21.15]{families}, with the addition that moduli spaces exist.

Given $\sigma\in\Stab^\dagger(\Db(S,\alpha))$ and  a Mukai vector $\bv \in H_{\mathrm{alg}}^*(S,B,\Z)$, we denote by $M_{(S,\alpha),\sigma}(\bv )$   the moduli space of $\sigma$-semistable objects on $\Db(S,\alpha)$ with $B$-twisted Mukai vector $\bv $ (see~\cite{HuSt,yos2,tod,bama1,families}).

\begin{theo}[Yoshioka]\label{thm:Yoshioka}
Let $(S,\alpha)$ be a twisted K3 surface and let $B\in H^2(S,\Q)$ be a B-field.\ 
Let $\bv \in H_{\mathrm{alg}}^*(S,B,\Z)$ be a primitive Mukai vector and let $\sigma\in\Stab^\dagger(\Db(S,\alpha))$ be $\bv$-generic.
\begin{itemize}
\item[\textnormal{(a)}] The moduli space $M_{(S,\alpha),\sigma}(\bv )$ is nonempty if and only if $\bv^2+2\geq0$.\
In that case, $M_{(S,\alpha),\sigma}(\bv )$ is a smooth projective hyper-K\"ahler manifold of $\mathrm{K3}^{[n]}$-type, where $n=\frac{\bv^2+2}{2}$.
\item[\textnormal{(b)}] The Mukai isomorphism gives   isometries of Hodge structures
\begin{align*}
&\vartheta\colon \bv^\perp/\bv \isomlra H^2(M_{(S,\alpha),\sigma}(\bv ),\Z)  &\text{if } \bv^2=0,\\
&\vartheta\colon \bv^\perp \isomlra H^2(M_{(S,\alpha),\sigma}(\bv ),\Z) &\text{if } \bv^2\geq 2,
\end{align*}
where $\bv^\perp\subset H^*(S,B,\Z)$ is endowed with the induced sub-Hodge structure, while\break $H^2(M_{(S,\alpha),H}(\bv ),\Z)$ is endowed with the standard Hodge structure together with the Beauville--Bo\-go\-mo\-lov--Fujiki form.
\end{itemize}
\end{theo}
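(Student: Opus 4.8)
The plan is to deduce the twisted Bridgeland statement from the classical Gieseker case, where it is due to Yoshioka, by exploiting the wall-and-chamber structure of $\Stab^\dagger(\Db(S,\alpha))$. First I would recall that, for the fixed primitive class $\bv$, this space carries a locally finite collection of walls, and that the $\bv$-generic locus (the complement of the walls) meets a distinguished \emph{Gieseker chamber}: a chamber in which $\sigma$-stability coincides with $\alpha$-twisted Gieseker stability for a generic polarization, via the large-volume-limit analysis of \cite{bri2,bama1}. In that chamber, parts (a) and (b) are exactly Yoshioka's theorems on moduli of twisted Gieseker-stable sheaves on a K3 surface (\cite{yos2}, with the twisting handled as in \cite{HuSt}): the moduli space is a smooth projective hyper-K\"ahler manifold of $\mathrm{K3}^{[n]}$-type with $2n=\bv^2+2$, nonempty precisely when $\bv^2\ge -2$, and the Mukai morphism is a Hodge isometry. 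The whole content is then to transport these assertions—nonemptiness, smoothness and projectivity, deformation type, and the Hodge isometry—from the Gieseker chamber to an arbitrary $\bv$-generic $\sigma$.

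For smoothness and the dimension count I would argue directly and uniformly, independently of the chamber and of the twist. Any $\sigma$-semistable object with primitive $\bv$ and generic $\sigma$ is $\sigma$-stable, hence simple, so $\Hom(E,E)=\C$; Serre duality in the CY2 category $\Db(S,\alpha)$ then gives $\Ext^2(E,E)=\C$, and since $\chi(E,E)=-\bv^2$ the expected dimension $\dim\Ext^1(E,E)=\bv^2+2$ is the actual dimension with vanishing obstruction space. The Yoneda pairing composed with Serre duality equips $M_{(S,\alpha),\sigma}(\bv)$ with a holomorphic symplectic form.

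Next I would handle the deformation type and projectivity by wall-crossing. Connecting $\sigma$ to a point of the Gieseker chamber by a path in $\Stab^\dagger$ crossing finitely many walls, the key input is the Bayer--Macr\`i positivity construction (\cite{bama1,bama2}): for each generic stability condition there is a natural class $\ell_\sigma$ that is ample on the corresponding moduli space, so the latter is projective, and crossing a single wall induces a birational map between the two smooth projective hyper-K\"ahler manifolds on its two sides. Since birational hyper-K\"ahler manifolds are deformation equivalent, the deformation type is constant along the path and equals that of the Gieseker moduli space, namely $\mathrm{K3}^{[n]}$; nonemptiness transports in the same way, being preserved by the birational wall-crossing maps for primitive generic $\bv$. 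This yields part (a) for every $\bv$-generic $\sigma$.

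Finally, for part (b) I would build the Mukai morphism $\vartheta$ from a (quasi-)universal family $\cE$ on $M_{(S,\alpha),\sigma}(\bv)\times S$ by $\vartheta(w)=\big[\pr_{M,*}\big(\ch(\cE)\sqrt{\td_S}\cdot \pr_S^* w\big)\big]_1$ together with the Mukai pairing, noting that for $\bv^2=0$ one has $\bv\in\bv^\perp$ and must quotient it out, whereas for $\bv^2\ge 2$ one works on $\bv^\perp$ directly. That $\vartheta$ is a Hodge isometry is Yoshioka's computation in the Gieseker chamber; because $\vartheta$ is defined lattice-theoretically from the universal family, which deforms along the wall-crossing path, both the isometry property and compatibility with Hodge structures are preserved, giving (b) in general. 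The main obstacle is the third step: controlling the wall-crossing—the ampleness of $\ell_\sigma$ and the birationality, with smoothness and projectivity on both sides, of the wall-crossing maps—which is precisely the hard analysis of \cite{bama2} and is what upgrades Yoshioka's Gieseker statement to all of $\Stab^\dagger(\Db(S,\alpha))$.
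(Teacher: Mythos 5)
The first thing to say is that the paper contains no proof of this statement: Theorem~\ref{thm:Yoshioka} is presented as a summary of known results, attributed to Yoshioka, with the relevant references (\cite{HuSt,yos2,tod,bama1,families}, and \cite{bri2,hms1} for the stability manifold) given in the surrounding text. So the comparison here is between your sketch and the cited literature rather than an argument in the paper --- and your sketch is, in outline, exactly how that literature establishes the result. The identification of the Gieseker chamber at the large volume limit (\cite{bri2,bama1}), Mukai's argument that stable objects are simple with $\Ext^2(E,E)=\C$ and unobstructed deformations, so the moduli space is smooth symplectic of dimension $\bv^2+2$, Yoshioka's nonemptiness, deformation type, and Hodge isometry for twisted Gieseker moduli (\cite{yos2}, with the twist as in \cite{HuSt}), and the Bayer--Macr\`i positivity and wall-crossing machinery (\cite{bama1,bama2}) combined with Huybrechts' theorem that birational hyper-K\"ahler manifolds are deformation equivalent: these are the right ingredients, assembled in the right order, and you correctly identify the wall-crossing analysis as the hard input.

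Two points would need care in a full write-up. First, as literally stated your transport of nonemptiness is circular: ``being preserved by the birational wall-crossing maps'' presupposes that the moduli space on the far side of the wall is nonempty, which is part of what is being proved; the correct statement from \cite{bama2} is that for primitive $\bv$ and generic stability conditions on the two sides of a wall, both moduli spaces are nonempty and birational --- this is an output of the wall-crossing theorem, not a formal consequence of birationality. Second, \cite{bama1,bama2} are written for untwisted K3 surfaces; the twisted versions you invoke require the extensions in \cite{hms1} (stability conditions on twisted K3 categories) and the statements of \cite{myy,yy,yos3}, which is the same substitution the paper itself makes in the proofs of Propositions~\ref{prop:AmpleKummer} and~\ref{prop:AmpleHilbert}. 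Neither point invalidates your outline; both are places where precise citations, rather than new ideas, are what is missing.
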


The $\bv$-genericity of the  stability condition $\sigma$ means $M_{(S,\alpha),\sigma}(\bv)=M_{(S,\alpha),\sigma}^{\mathrm{st}}(\bv)$, namely all $\sigma$-semistable objects are $\sigma$-stable.\

Given an ample divisor $H$ on $S$, we denote the classical moduli spaces (particular cases of the above theorem) of $H$-Gieseker semistable (resp.~stable) $\alpha$-twisted sheaves by $M_{(S,\alpha),H}(\bv)$ (resp.~$M_{(S,\alpha),H}^{\mathrm{st}}(\bv )$) and of slope-semistable (resp.~slope-stable) torsion-free sheaves by $M_{(S,\alpha),H}^{\mu}(\bv )$ (resp.~$M_{(S,\alpha),H}^{\mu\mathrm{-st}}(\bv )$).

\begin{rema}\label{rmk:abelian}
Analogously, one can define twisted abelian surfaces and their moduli spaces.\
Let $(A,\alpha)$ be a twisted abelian surface.\
Given $\sigma\in\Stab^\dagger(\Db(A,\alpha))$,\footnote{For twisted abelian surfaces, the space of stability conditions is   connected.}  an analogue of Theorem~\ref{thm:Yoshioka} holds for the \emph{generalized Kummer moduli space} $K_{(A,\alpha),\sigma}(\bv)$, namely the fiber at~$0$ of the Albanese morphism $\alb\colon M_{(A,\alpha),\sigma}(\bv)\to A\times A^\vee$.\
More precisely, let $\bv \in H_{\mathrm{alg}}^*(A,B,\Z)$ be a primitive Mukai vector.\ When  $\bv^2\geq6$, the moduli space $K_{(A,\alpha),\sigma}(\bv)$ is nonempty of dimension $\bv^2-2$ and the Mukai isomorphism   gives an isomorphism $\vartheta\colon  \bv^\perp\subset H^*(A,B,\Z) \isomto H^2(K_{(A,\alpha),\sigma}(\bv),\Z)$.

The case $\bv^2=4$ is slightly degenerate:  the generalized Kummer moduli space is isomorphic to a K3 surface.\
We still have the Mukai morphism, but it is not an isomorphism: we can identify $\bv^\perp$ with the part of the cohomology of $K_{(A,\alpha),H}(\bv)$ coming from the abelian surface plus an extra class.\
This identification is not an isometry in this case: it satisfies
\[
q(\vartheta(\mathbf{a}),\vartheta(\mathbf{b})) =2 (\mathbf{a},\mathbf{b}),
\]
for all $\mathbf{a},\mathbf{b}\in \bv^\perp$.
When $\bv=(1,0,-2)$ (and so $B=0$) and $\sigma$ is $\bv$-generic, then $K_{(A,\alpha),H}(\bv )\cong\Kum(A)$, the class $-(0,D_A,0)$ gets identified with the cohomology class $D\coloneqq \alpha(D_A)$ induced by $D_A$, while the class $-(1,0,2)$ gets identified with $\frac 12 \sum_{i=1}^{16} E_i$.

As in the K3 case, given an ample divisor $H$ on $A$, we use the notation $K_{(A,\alpha),H}(\bv)$, and so on, for the particular case of $H$-Gieseker semistable sheaves.
\end{rema}

\subsection{Polarized families of moduli spaces}\label{subsec:Mukai}

We follow \cite[Sections 2--5]{per} for the basic notions used in this section (see also~\cite[Section 2]{MaSt}).\
The goal is to give a short review of~\cite[Part IV]{families}: a polarized family of CY2 categories gives rise to a polarized family of moduli spaces of stable objects.\
We will apply our results only in geometric situations arising from CY2 categories of twisted K3 of abelian surfaces, as in Section~\ref{subsec:twisted}.

Let $\cD$ be a smooth proper CY2 category over the complex numbers, as in~\cite[Definition~6.1]{per}.\
We denote by $\widetilde{H}(\cD,\Z)$ its topological K-theory, together with the Mukai Hodge structure.\
We also have a Mukai vector $v\colon K(\cD)\to H_{\textnormal{alg}}(\cD,\Z)$ with the same formal properties as in the twisted K3 or abelian surface case.

\begin{exam}\label{ex:TiwstedK3vsCY2categories}
The main example we will consider  is when $(S,\alpha)$ is a twisted K3 or abelian surface and $\cD=\Db(S,\alpha)$.\
In these cases, $\widetilde{H}(\cD,\Z)$ is isometric to $\widetilde{H}(S,B,\Z)$ as Hodge structures, once a B-field lift is fixed.
\end{exam}

As in the twisted setting, for $\sigma=(Z,\cP)\in\Stab(\cD)$, the space of stability conditions with respect to the algebraic Mukai lattice $H_{\textnormal{alg}}(\cD,\Z)$, and $\bv \in H_{\mathrm{alg}}^*(\cD)$ a Mukai vector, we denote by $M_{\cD,\sigma}(\bv )$   the moduli space of $\sigma$-semistable objects on $\cD$ with Mukai vector $\bv$.\ It is a proper algebraic space over $\C$ (\cite{ap,ahlh}).\
The stable locus $M_{\cD,\sigma}^{st}(\bv)$ is open, smooth, and symplectic.\
The main result of \cite{bama1} shows that there exists a real numerical Cartier divisor class $\ell_{\sigma}(\bv)$ on $M_{\cD,\sigma}(\bv)$ which is strictly nef (it is ample in geometric situations, as we will see below).

We now consider the relative situation and introduce the notion of polarized family of~CY2 categories.\
Let $T$ be a  quasi-projective scheme over $\C$.\
Let $\cD/T$ be a CY2 category over~$T$ (still in the sense of~\cite[Definition~6.1]{per}).\
We denote by $\widetilde{H}(\cD/T,\Z)$ the Mukai local system, as in~\cite[Definition~6.4]{per}.\
The {\em uniformly numerical relative Grothendieck group of $\cD$ over $T$} (see ~\cite[Proposition and Definition 21.5]{families}) is denoted by $\cN(\cD/T)$; it is a free abelian group of finite rank which can be thought of as the numerical Grothendieck group of $\cD_t$, for a very general closed point $t\in T$, or equivalently as the group of sections of $\widetilde{H}(\cD/T,\Z)$ which are algebraic on each fiber.

The notion of stability condition on $\cD$ over $T$ was introduced in~\cite{families}.\
We denote by $\Stab_{\cN}(\cD/T)$ the space of stability conditions $\underline{\sigma}=\{\sigma_t\}_{t \in T}$ on $\cD$ over $T$ whose central charge factors through $\cN(\cD/T)$ (see~\cite[Theorem 22.2]{families}\footnote{In the notation of \emph{loc.~cit.}, this means that the finite-rank free abelian group $\Lambda$ is $\cN(\cD/T)$ and the morphism $v\colon K_{\mathrm{num}}(\cD/T)\to \Lambda$ is given by the Mukai vector.}).\
It leads to the following definition.

\begin{defi}\label{def:PolarizedFamilyCY2}
We say that a CY2 category $\cD/T$ over $T$ is a \emph{polarized family} if there exists a stability condition $\underline{\sigma}=\{\sigma_t\}_{t \in T}\in\Stab_{\cN}(\cD/T)$ on $\cD$ over $T$ whose central charge factors through $\cN(\cD/T)$.
\end{defi}

One of the main results in~\cite{families}, together with the generalization of~\cite{muk1} given in~\cite{per},  implies that a family of polarized CY2 categories gives a family of quasi-polarized moduli spaces.

\begin{theo}\label{thm:families}
Let $(\cD/T,\underline{\sigma})$ be a polarized CY2 category over a complex quasi-projective scheme $T$.\
Let $\bv\in\cN(\cD/T)$ be a Mukai vector for which the relative moduli $M_{\underline{\sigma}}(\bv)$ consists only of stable objects.\
Then $M_{\underline{\sigma}}(\bv)\to T$ is a smooth and proper algebraic space over $T$, all the fibers are projective and symplectic, and there exists a relative real numerical Cartier divisor class $\ell_{\underline{\sigma}}\in N^1(M_{\underline{\sigma}}(\bv)/T)$ that is relatively strictly nef.\
If $T$ is smooth,   $M_{\underline{\sigma}}(\bv)\to T$ is also projective.
\end{theo}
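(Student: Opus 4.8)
The plan is to assemble the statement from the absolute theory of moduli of stable objects on CY2 categories together with its relative refinement, since all the needed ingredients are already available: the construction and properness of relative moduli spaces in~\cite[Part~IV]{families}, the positivity result of~\cite{bama1}, and the relative framework of~\cite{per}. Concretely, I would verify the four assertions in turn---smoothness and properness of $M_{\underline{\sigma}}(\bv)\to T$, projectivity and symplecticity of the fibers, existence of the relatively strictly nef class $\ell_{\underline{\sigma}}$, and projectivity of the total space when $T$ is smooth---each by locating the corresponding result in the cited literature and checking that the hypotheses are met.

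First I would treat the structure of $M_{\underline{\sigma}}(\bv)\to T$. By the existence results for relative moduli spaces in~\cite{families}, $M_{\underline{\sigma}}(\bv)$ is a proper algebraic space over $T$. The hypothesis that $M_{\underline{\sigma}}(\bv)$ consists only of stable objects means that it coincides with the (relative) stable locus, which is smooth over $T$; hence the morphism is smooth and proper. The fiber over $t$ is the absolute moduli space $M_{\cD_t,\sigma_t}(\bv)$, which is smooth, projective, and carries a holomorphic symplectic form induced by the CY2 structure of $\cD_t$ (Theorem~\ref{thm:Yoshioka} in the geometric cases we use, and the corresponding statements of~\cite{bama1} in general).

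Next I would produce the relative positivity class. For a fixed $\sigma$, the class $\ell_\sigma(\bv)$ of~\cite{bama1} is strictly nef on $M_{\cD,\sigma}(\bv)$, and it is ample in the geometric situations relevant to this paper. Carrying this construction out in families, as in~\cite{families}, yields a class $\ell_{\underline{\sigma}}\in N^1(M_{\underline{\sigma}}(\bv)/T)$ whose restriction to each fiber is $\ell_{\sigma_t}(\bv)$; it is therefore relatively strictly nef, and fiberwise ample in the geometric cases.

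The main obstacle is the final projectivity assertion for smooth $T$. The strategy is to promote the real, fiberwise-ample numerical class $\ell_{\underline{\sigma}}$ to a genuine relatively ample invertible sheaf, and then to invoke the fact that a proper algebraic space over a scheme admitting a relatively ample invertible sheaf is a projective scheme. Since ampleness on each fiber is an open condition in the finite-dimensional real N\'eron--Severi space, one can first perturb $\ell_{\underline{\sigma}}$ to a nearby rational---hence, after scaling, integral---class that is still fiberwise ample; the smoothness of $T$ is what allows such a numerical class to be represented by an honest line bundle on the smooth total space, after which the criterion (of EGA~III type) that an invertible sheaf ample on every fiber of a proper morphism is relatively ample applies. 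Once a relatively ample line bundle is available, relative projectivity of $M_{\underline{\sigma}}(\bv)\to T$ follows formally. I expect the delicate points to be exactly this passage between real numerical classes and actual Cartier divisors, all of which is carried out in~\cite{families}.
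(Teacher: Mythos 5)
Your first two steps (properness and the existence of $\ell_{\underline{\sigma}}$ from \cite[Part IV]{families}, smoothness from the stability hypothesis via \cite{per}, the symplectic form from the CY2/Serre duality structure) match the paper's proof. But the proposal has a genuine gap exactly where the theorem is hardest: projectivity. The fibers $\cD_t$ are \emph{general} CY2 categories, not derived categories of twisted K3 or abelian surfaces, so neither Theorem~\ref{thm:Yoshioka} nor \cite{bama1} applies to them; \cite{bama1} is a theorem about K3 surfaces, and what it (and its relative version in \cite{families}) gives in the general CY2 setting is only that $\ell_{\sigma_t}$ is \emph{strictly nef}, not ample. Strict nefness does not imply ampleness, so projectivity of the fibers is not a formal consequence of the cited results. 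This is precisely why the paper invokes the projectivity criterion of \cite[Corollary 3.4]{v-p}: the fibers are smooth proper symplectic algebraic spaces carrying a strictly nef class, and it is that criterion (not \cite{bama1}) which upgrades this to projectivity, and which also yields projectivity of the total space when $T$ is smooth.

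For the same reason, your argument for the last assertion starts from a false premise: you call $\ell_{\underline{\sigma}}$ a ``fiberwise-ample numerical class,'' but fiberwise ampleness is not part of the hypotheses or of the general theory; in the paper it is only established under the extra assumption that some fiber is geometric (Proposition~\ref{prop:AmpleFamily}), which Theorem~\ref{thm:families} does not assume. The perturbation step is also problematic on its own terms: strict nefness is not an open condition in $N^1$, so you cannot perturb $\ell_{\underline{\sigma}}$ to a nearby rational class and retain its positivity; in the paper's Proposition~\ref{prop:AmpleFamily} the rationality is achieved by deforming the \emph{stability condition} $\underline{\sigma}$ (so that the new class is again of the form $\ell_{\underline{\sigma}'}$), not by perturbing the class alone, and even there the passage from big and strictly nef to ample uses triviality of the canonical bundle and the Base Point Free Theorem rather than an EGA-type fiberwise criterion. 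So the EGA~III route cannot be made to work without first solving the very problem the theorem is about; the missing ingredient is \cite[Corollary 3.4]{v-p}.
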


\begin{proof}
The fact that $M_{\underline{\sigma}}(\bv)\to T$ is proper   and the existence and positivity property of the divisor class $\ell_{\underline{\sigma}}$ is exactly \cite[Theorem~21.24 and Theorem~21.25]{families}.\
Since the relative moduli space~$M_{\underline{\sigma}}(\bv)$ consists only of stable objects, the smoothness over $T$ follows from \cite[Theorem~1.4]{per}.\
The projectivity of the fibers, or the more general statement if $T$ is smooth, then follows  from \cite[Corollary 3.4]{v-p}.\
Finally, the symplectic form comes from relative Serre duality: it is nondegenerate by assumption and skew-symmetric by~\cite{vdb}.
\end{proof}

The relative divisor class is ample when one of the fibers $\cD_t$ over a closed point $t\in T$ is geometric.

\begin{prop}\label{prop:AmpleFamily}
In addition to the  assumptions made above, let us further assume that there exists a closed point $t_0 \in T$ such that $\cD_{t_0}\cong \Db(S,\alpha)$, for a twisted K3 or abelian surface $(S,\alpha)$, and that $\sigma_{t_0}\in \Stab^{\dagger}(\Db(S,\alpha))$.\
Then $\ell_{\underline{\sigma}}$ is relatively ample.
\end{prop}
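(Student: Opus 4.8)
The plan is to check relative ampleness fiberwise. Since $M:=M_{\underline{\sigma}}(\bv)\to T$ is proper by Theorem~\ref{thm:families}, the class $\ell:=\ell_{\underline{\sigma}}$ will be relatively ample as soon as its restriction $\ell_t$ to every fiber $M_t$ is ample (this also yields relative projectivity). Theorem~\ref{thm:families} already gives that $\ell$ is relatively strictly nef, so each $\ell_t$ is strictly nef, and the entire content is to upgrade strict nefness to ampleness on each fiber. The essential difficulty is that a general fiber $\cD_t$ need not be geometric, so the ampleness statement of \cite{bama1} cannot be invoked pointwise; instead I would propagate positivity from the single geometric fiber over $t_0$.

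First I would record ampleness at $t_0$. The hypothesis in Theorem~\ref{thm:families} that $M_{\underline{\sigma}}(\bv)$ consists only of stable objects forces $\sigma_{t_0}$ to be $\bv$-generic, and $\sigma_{t_0}\in\Stab^{\dagger}(\Db(S,\alpha))$ with $(S,\alpha)$ a twisted K3 or abelian surface. Hence the main theorem of \cite{bama1} applies and shows that the strictly nef class $\ell_{t_0}$ is in fact ample on $M_{t_0}$. In particular $q(\ell_{t_0})>0$, where $q$ is the Beauville--Bogomolov--Fujiki form, since the Fujiki relation $\ell_{t_0}^{2n}=c\cdot q(\ell_{t_0})^n$ has positive left-hand side and positive constant $c$.

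Next I would show that $q(\ell_t)>0$ for \emph{every} $t$. Because $\ell$ is a single global relative class, its restrictions $\ell_t$ form a flat (monodromy-invariant) section of the local system $R^2\pi_*\R$ on $M\to T$; as the Beauville--Bogomolov--Fujiki form is a parallel section of $\Sym^2(R^2\pi_*\R)^\vee$, the function $t\mapsto q(\ell_t)$ is locally constant, hence constant on each connected component of $T$. (Equivalently, under the relative Mukai isometry $\vartheta$ of Theorem~\ref{thm:Yoshioka}(b), the class $\ell_t$ is the image of a vector of fixed square in $\bv^\perp$ determined by $\underline{\sigma}$.) Combined with the previous step, this gives $q(\ell_t)=q(\ell_{t_0})>0$ for all $t$.

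Finally I would run the fiberwise argument on the projective hyper-K\"ahler manifold $M_t$: the class $\ell_t$ is nef with $q(\ell_t)>0$, so by the Fujiki relation $\ell_t^{2n}>0$ and $\ell_t$ is big and nef. A big and nef class contracts precisely the curves $C$ with $\ell_t\cdot C=0$ and is the pullback of an ample class under the resulting birational contraction; since $\ell_t$ is strictly nef it contracts no curve, so the contraction is finite and birational, hence an isomorphism by Zariski's main theorem, and $\ell_t$ is ample. Applying the fiberwise criterion for relative ampleness to the proper morphism $M\to T$ then yields that $\ell=\ell_{\underline{\sigma}}$ is relatively ample, as claimed. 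I expect the main obstacle to be exactly this upgrade away from $t_0$: since the general fiber need not be geometric, the positivity $q(\ell_t)>0$ is not available pointwise from \cite{bama1} and must be transported from $t_0$ through the constancy of the Beauville--Bogomolov--Fujiki square of the global class $\ell$.
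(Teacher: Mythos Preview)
Your overall strategy matches the paper's: ampleness at $t_0$ from \cite{bama1}, propagate bigness to every fiber, then strictly nef $+$ big $+$ trivial canonical $\Rightarrow$ ample. The difference lies only in the middle step. The paper propagates bigness cohomologically: after perturbing so that $\ell_{\underline{\sigma}}$ is integral, ampleness is open and hence holds on a Zariski-open $U\ni t_0$; on $U$ one has $h^0=\chi$ by Serre vanishing, and upper semicontinuity of $h^0$ then forces $h^0(M_t,L_t^{\otimes m})$ to have maximal growth for every $t$, so $L_t$ is big. You instead propagate numerically via local constancy of $q(\ell_t)$, which is cleaner in spirit.

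Two points need adjustment, however. First, you call $M_t$ a hyper-K\"ahler manifold and invoke the Beauville--Bogomolov--Fujiki form and the Fujiki relation. This is fine in the K3 case, but when $(S,\alpha)$ is a twisted \emph{abelian} surface the fiber $M_{t_0}=M_{(A,\alpha),\sigma_{t_0}}(\bv)$ is \emph{not} irreducible holomorphic symplectic: it has an Albanese fibration over $A\times A^\vee$ with generalized Kummer fiber (Remark~\ref{rmk:abelian}), so there is no BBF form on $M_t$ and the Fujiki relation does not apply. The fix is immediate: drop the BBF detour and use directly that the top self-intersection number $\ell_t^{\dim M_t}$ is locally constant in the smooth proper family $M\to T$; it is positive at $t_0$ since $\ell_{t_0}$ is ample, and a nef divisor with positive top self-intersection is big. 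Second, your assertion that a big and nef class ``is the pullback of an ample class under the resulting birational contraction'' presupposes semiampleness, which is not automatic from big $+$ nef alone; here it follows from the Base Point Free Theorem, using that $M_t$ is smooth projective symplectic and hence has trivial canonical bundle. The paper invokes BPF explicitly at exactly this point. With these two corrections your argument is complete and equivalent to the paper's.
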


\begin{proof}
The argument is based on~\cite[Section 33]{families} and was found by Giulia Sacc\`a.\
We repeat it here for completeness.

We can slightly deform $\underline{\sigma}$ and, upon taking multiples of the divisor class $\ell_{\underline{\sigma}}$, assume that this class is integral.\
By~\cite[Corollary 7.5]{bama1}, the class $\ell_{\sigma_{t_0}}=\ell_{\underline{\sigma}}|_{M_{\sigma_{t_0}}(\bv)}$ is ample on $M_{\sigma_{t_0}}(\bv)$.\
Since ampleness is an open property, $\ell_{\sigma_{t}}$ is ample for all $t$ in a Zariski open subset~$U\subset T$.\
Let us fix a line bundle $L_{\underline{\sigma}}$ whose numerical class is $\ell_{\underline{\sigma}}$.\
By relative Serre vanishing, we can assume that $L_{\sigma_t}$ has no higher cohomology  for all $t \in U$.\
Hence
\[
h^0(M_{\sigma_t}(\bv), L_{\sigma_t}^{\otimes m})=\chi(M_{\sigma_t}(\bv), L_{\sigma_t}^{\otimes m}) 
\]
for all $t\in U$ and $m>0$, and thus this number is independent of $t$.

By semicontinuity, this shows that $h^0(M_{\sigma_t}(\bv), L_{\sigma_t}^{\otimes m})$ has maximal growth for all $t \in T$.\ Therefore it is big on $M_{\sigma_t}(\bv)$ for all $t\in T$.\ Since $M_{\sigma_t}(\bv)$ is a smooth, projective, symplectic variety, it has trivial canonical bundle.\ Hence, by Kawamata's Base Point Free Theorem, since the line bundle $L_{\sigma_{t}}$ is also strictly nef, it must be ample for all $t\in T$, as we wanted.
\end{proof}

\subsection{Examples}\label{subsec:examples}

We use Theorem~\ref{thm:families} to construct families of polarized HK manifolds.\
We consider separately the cases of twisted abelian and K3 surfaces: the ideas are similar but slightly more involved in the K3 case.

\subsubsection*{Generalized Kummer varieties}
Let $r,d\geq1$.\
Let $(A,\alpha,H_A)$ be a polarized twisted abelian surface, with $H_A^2=2d$ and $\alpha^r=\id$.\
We let $B\in H^2(A,\Q)$ be a B-field associated with $\alpha$ and set $B_0\coloneqq r B\in H^2(A,\Z)$.\
We also set $a\coloneqq H_A\cdot B_0\in\Z$ and $b\coloneqq \frac{B_0^2}{2}\in\Z$.

We further fix integers $c$ and $s$ and consider the Mukai vector  
\[
\bv \coloneqq (r,c H_A + B_0, s)\in H^*_{\mathrm{alg}}(A,B,\Z).
\]
We assume  
\[
\bv^2= 2(dc^2 + ac+ b- rs) \geq 4\qquad\text{ and }\qquad \gcd(r,2cd+a)=1.
\]

We consider the Mukai vectors $\boldsymbol{\ell},\boldsymbol{\delta}\in\bv^\perp\subset H^*_{\mathrm{alg}}(A,B,\Z)$ given by
\[
\boldsymbol{\ell}\coloneqq -(0,r H_A,2cd+a) \qquad\text{ and }\qquad \boldsymbol{\delta}\coloneqq -r \bv -( \bv^2 ) \mathbf{w},
\]
where $\mathbf{w}\coloneqq (0,0,1)$.

Finally, we let $K\coloneqq K_{(A,\alpha),H_A}(\bv)$ be the generalized Kummer variety (of dimension $\bv^2-2$) arising from the moduli space of $H$-Gieseker stable sheaves (see Remark~\ref{rmk:abelian}).\
On $K$, the divisor class
\[
D_u \coloneqq  \vartheta\left( u \boldsymbol{\ell} - \boldsymbol{\delta} \right) \in \NS(K)_{\R}
\]
is ample for all $u$ sufficiently large (\cite[Corollary~9.14]{bama1}).\
The following result gives an explicit lower bound.

\begin{prop}\label{prop:AmpleKummer}
In the above notation, the class $D_u$ on $K$ is ample for all $u>r \frac{\bv^2}{2}$.\
Moreover, if $\bv^2\leq 2r-2$, the class $D_{\infty}\coloneqq \vartheta(\boldsymbol{\ell})$ is also ample.
\end{prop}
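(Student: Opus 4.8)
The plan is to read off ampleness from the Bayer--Macrì description of the ample cone of a Bridgeland moduli space and then reduce the whole statement to an explicit lattice inequality in $\bv^{\perp}$. Since $K=K_{(A,\alpha),H_A}(\bv)$ is a moduli space of $H_A$-Gieseker-stable sheaves, it is also the Bridgeland moduli space $M_{\sigma_0}(\bv)$ for a stability condition $\sigma_0\in\Stab^{\dagger}(\Db(A,\alpha))$ in the Gieseker chamber (Remark~\ref{rmk:abelian}). Under the Mukai isometry $\vartheta$ the ample cone of $K$ is the connected component of the positive cone of $\bv^{\perp}\otimes\R$ containing an ample class, cut out by the hyperplanes $\kappa^{\perp}$ attached to the walls of $\bv$, each wall being a rank-$2$ hyperbolic sublattice $\cH\ni\bv$ of $H^{*}_{\mathrm{alg}}(A,B,\Z)$ carrying a genuine destabilization $\bv=\mathbf{z}_1+\mathbf{z}_2$ (here $\kappa$ spans the rank-$1$ lattice $\cH\cap\bv^{\perp}$). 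Thus a class $\vartheta(x)$ with $x^{2}>0$ is ample if and only if $(x,\kappa)>0$ for every such $\kappa$, oriented so as to be effective. The hypothesis $\gcd(r,2cd+a)=1$ makes $\bv$ primitive and forces $\sigma_0$-semistability to coincide with stability, so this clean chamber picture applies.

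First I would dispose of the positive-cone condition, which is a routine Mukai-pairing computation: from $(\mathbf{w},\bv)=-r$ and $\mathbf{w}^{2}=0$ one gets $(\boldsymbol{\ell},\boldsymbol{\delta})=0$, $\boldsymbol{\ell}^{2}=2r^{2}d$, $\boldsymbol{\delta}^{2}=-r^{2}\bv^{2}$, whence
\[
(u\boldsymbol{\ell}-\boldsymbol{\delta})^{2}=r^{2}\bigl(2du^{2}-\bv^{2}\bigr),
\]
which is $>0$ for all $u>r\bv^{2}/2$. So $D_u$ lies in the positive cone throughout the asserted range, and everything reduces to the wall inequalities. Here it is convenient to observe that "$u_{\cH}:=(\boldsymbol{\delta},\kappa)/(\boldsymbol{\ell},\kappa)\le r\bv^{2}/2$ for every wall'' is equivalent to the nefness of $D_{r\bv^{2}/2}=\tfrac{r\bv^{2}}{2}\boldsymbol{\ell}-\boldsymbol{\delta}$, after which all $D_u$ with $u>r\bv^{2}/2$ are ample because they lie strictly inside the cone.

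For the main statement I would compute, for $\mathbf{z}=(\mathbf{z}_0,\mathbf{z}_1,\mathbf{z}_2)$,
\[
(\boldsymbol{\ell},\mathbf{z})=-\bigl(rH_A\cdot \mathbf{z}_1-\mathbf{z}_0(2cd+a)\bigr),\qquad (\boldsymbol{\delta},\mathbf{z})=-\bigl(r(\bv,\mathbf{z})-\bv^{2}\mathbf{z}_0\bigr).
\]
The wall carrying $\vartheta(\boldsymbol{\ell})$ is the slope (Hilbert--Chow-type) wall $\langle\bv,\mathbf{w}\rangle$, where $\kappa=\boldsymbol{\delta}$, $(\boldsymbol{\ell},\boldsymbol{\delta})=0$ and $(D_u,\boldsymbol{\delta})=r^{2}\bv^{2}>0$ is independent of $u$; it imposes no finite constraint, so the binding walls are exactly those with $(\boldsymbol{\ell},\kappa)\neq0$, i.e. of $H_A$-slope different from $\bv$. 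The crux — and the main obstacle — is the uniform bound $u_{\cH}\le r\bv^{2}/2$, equivalently the nefness of $D_{r\bv^{2}/2}$. I would establish it through the Bayer--Macrì numerical classification of walls: the absence of spherical classes on the abelian surface $A$ forces every destabilizing factor to satisfy $\mathbf{z}_i^{2}\ge0$, and together with $\bv^{2}\ge4$ and the $\gcd$ hypothesis this makes the set of walls finite and reduces the bound to a finite inequality on the rank-$2$ lattices $\cH$. (A model computation on the extremal wall shows the two pairings above are proportional there with ratio exactly $r\bv^{2}/2$, confirming the threshold is sharp.) The degenerate case $\bv^{2}=4$, where $K$ is a K3 surface and $\vartheta$ rescales the form by $2$, is handled separately but yields the same inequality, the rescaling being a positive homothety.

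Finally, the "moreover''. The class $D_{\infty}=\vartheta(\boldsymbol{\ell})$ is the determinant (Donaldson--Uhlenbeck) polarization attached to $H_A$; it is nef, and ample precisely when the associated morphism from $K$ to the Uhlenbeck-type space is injective, i.e. when every $H_A$-Gieseker-stable sheaf of class $\bv$ is $\mu$-stable and locally free. I would rule out the two failures by Bogomolov-type inequalities, writing $\bv^{2}=\Delta(\bv)$ for the discriminant. A non-locally-free sheaf $E$ with $E^{\vee\vee}/E$ of length $\ell\ge1$ gives $\bv^{2}=\bigl(v(E^{\vee\vee})\bigr)^{2}+2\ell r\ge 2r$, while a properly $\mu$-semistable sheaf yields a same-slope decomposition $\bv=\mathbf{z}_1+\mathbf{z}_2$ with $\mathbf{z}_i^{2}\ge0$ and $r_1+r_2=r$, for which discriminant additivity reads
\[
\bv^{2}=r\Bigl(\tfrac{\mathbf{z}_1^{2}}{r_1}+\tfrac{\mathbf{z}_2^{2}}{r_2}\Bigr).
\]
Using that the $\mathbf{z}_i^{2}$ are even and $\bv^{2}\ge4$, a short integrality analysis shows that no such proper configuration survives once $\bv^{2}\le 2r-2$, so both phenomena are excluded and $\vartheta(\boldsymbol{\ell})$ is ample. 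I expect the delicate point to be exactly this integrality bookkeeping in the $\mu$-semistable case, which is what pins the threshold at $2r-2$ rather than at a weaker value.
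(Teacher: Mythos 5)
Your framework for the main statement coincides with the paper's (Bayer--Macr\`i positivity plus the numerical description of potential walls for abelian surfaces, i.e.\ classes $\mathbf{a}$ with $\mathbf{a}^2\ge 0$ and $1\le(\mathbf{a},\bv)\le\bv^2/2$), and your lattice computations ($\boldsymbol{\ell}^2=2dr^2$, $\boldsymbol{\delta}^2=-r^2\bv^2$, $(\boldsymbol{\ell},\boldsymbol{\delta})=0$, the pairings against $\mathbf{z}$) are correct. The gap is that the entire content of the statement --- the uniform bound $(\mathbf{a},\boldsymbol{\delta})/(\mathbf{a},\boldsymbol{\ell})\le r\,\bv^2/2$ over \emph{all} such classes --- is asserted rather than proved: you defer it to ``the Bayer--Macr\`i numerical classification of walls'' plus ``a model computation on the extremal wall,'' but the candidate classes form an a priori infinite set, and sharpness at one wall bounds nothing about the others. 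The paper proves the inequality by hand: writing $\mathbf{a}=(\alpha,D,\beta)$ and $\widetilde{D}:=r\,D-\alpha(cH_A+rB)$, one has $(\mathbf{a},\boldsymbol{\ell})=H_A\cdot\widetilde{D}$ and $(\mathbf{a},\boldsymbol{\delta})^2=\bv^2\widetilde{D}^2+r^2\bigl((\mathbf{a},\bv)^2-\bv^2\mathbf{a}^2\bigr)$; when $\widetilde{D}^2\le0$ the bound follows at once from $1\le(\mathbf{a},\bv)\le\bv^2/2$ and $\mathbf{a}^2\ge0$, and when $\widetilde{D}^2\ge2$ it follows from the Hodge index inequality $(H_A\cdot\widetilde{D})^2\ge H_A^2\widetilde{D}^2$ together with $r\ge1$, $\bv^2\ge4$, $H_A^2\ge2$. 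Some such estimate --- Hodge index is the essential input, and it never appears in your treatment of the main claim --- must be supplied.

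The ``moreover'' has a sharper flaw. Your exclusion of non-locally-free sheaves is correct and is the paper's argument ($v(E^{\vee\vee})^2=\bv^2-2r\ell<0$ is impossible, since stable sheaves on an abelian surface have nonnegative Mukai square). But your exclusion of properly $\mu$-semistable sheaves does not work. First, your ``discriminant additivity'' is not an identity for decompositions with merely equal $H_A$-slope: the correct identity is
\[
\bv^{2}=r\Bigl(\tfrac{\mathbf{z}_1^{2}}{r_1}+\tfrac{\mathbf{z}_2^{2}}{r_2}\Bigr)-\frac{\xi^{2}}{r_1r_2},
\qquad
\xi:=r_2\,c_1(\mathbf{z}_1)-r_1\,c_1(\mathbf{z}_2),
\]
where equal slopes give $H_A\cdot\xi=0$, hence $\xi^2\le0$ by Hodge index; your formula holds only when the first Chern classes are proportional. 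Second, and decisively, no integrality bookkeeping can exclude such decompositions using only $\bv^2\le 2r-2$, $\bv^2\ge4$, and evenness: for instance $r=6$, $r_1=r_2=3$, $\mathbf{z}_1^2=\mathbf{z}_2^2=2$, $\xi=0$ gives $\bv^2=8\le 2r-2=10$, a numerically consistent proper same-slope configuration. What actually kills properly $\mu$-semistable sheaves is the arithmetic hypothesis $\gcd(r,2cd+a)=1$, which you never invoke in this part: since $H_A\cdot(cH_A+rB)=2cd+a$, a same-slope subsheaf of rank $0<r_1<r$ would have integral $H_A$-degree equal to $r_1(2cd+a)/r$, forcing $r\mid r_1$, a contradiction. (In the example above, equal slopes force $2cd+a$ to be even, which violates the gcd hypothesis.) This is how the paper argues; note also that the threshold $2r-2$ is pinned by the locally-free issue alone, not by the $\mu$-semistable bookkeeping as you suggest.
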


\begin{proof}
To prove that $D_u$ is ample for all $\infty>u>r \frac{\bv^2}{2}$, we have to show
\begin{equation}\label{eq:AmpleKummer1}
\frac{(\mathbf{a},\boldsymbol{\delta})}{(\mathbf{a},\boldsymbol{\ell})}\stackrel{\mathord{?}}{\leq} r \frac{\bv^2}{2} 
\end{equation}
for all Mukai vectors $\mathbf{a} = (\alpha, D, \beta)\in H^*_{\mathrm{alg}}(A,B,\Z)$ satisfying $(\mathbf{a},\boldsymbol{\ell})\ne 0$,
\begin{equation}\label{eq:AmpleKummer3}
   \mathbf{a}^2 \geq 0, \qquad \text{ and }\qquad 1\leq (\mathbf{a},\bv)\leq \frac{\bv^2}{2}   
\end{equation}
(we may of course assume further $ (\mathbf{a},\boldsymbol{\ell})  (\mathbf{a},\boldsymbol{\delta})\geq1$).\footnote{This is the version for abelian surfaces of~\cite[Theorem 12.1]{bama2}.\ 
Although this result is not explicitly stated in~\cite{myy,yy,yos3}, it   follows directly from~\cite[Proposition 3.15]{yos3} or by a similar argument as in~\cite{bama2}.}

Explicitly, setting $\widetilde{D}\coloneqq r D - \alpha (c H_A+r B) $, we have
\[
\begin{split}
&(\mathbf{a},\boldsymbol{\ell}) = H_A \cdot \widetilde{D} ,\\
&(\mathbf{a},\boldsymbol{\delta})^2 = \bv^2 \widetilde{D}^2 + r^2 \left( (\mathbf{a},\bv)^2 - \bv^2 \mathbf{a}^2\right).
\end{split}
\]
Hence, \eqref{eq:AmpleKummer1} can be written as
\begin{equation*}\label{eq:AmpleKummer2}
\bv^2 \widetilde{D}^2 + r^2 \left( (\mathbf{a},\bv)^2 - \bv^2 \mathbf{a}^2\right) \stackrel{\mathord{?}}{\leq} r^2 \frac{(\bv^2)^2}{4} (H_A\cdot \widetilde{D})^2.
\end{equation*}
If $\widetilde{D}^2\leq0$, this inequality   follows  from the inequalities~\eqref{eq:AmpleKummer3}.

Hence, we may assume $\widetilde{D}^2\geq2$.\
By the Hodge Index Theorem, we have $(H_A\cdot \widetilde{D})^2 \geq H_A^2 \widetilde{D}^2$ hence   it is enough to show the inequality
\begin{equation*}
\bv^2 \widetilde{D}^2 + r^2 \left( (\mathbf{a},\bv)^2 - \bv^2 \mathbf{a}^2\right) \stackrel{\mathord{?}}{\leq} r^2 \frac{(\bv^2)^2}{4} H_A^2 \widetilde{D}^2.
\end{equation*}
Upon dividing by $\bv^2\geq4$, this is equivalent to
\begin{equation}\label{eq:AmpleKummer4}
\widetilde{D}^2 + r^2 \left( \frac{(\mathbf{a},\bv)^2 }{\bv^2}- \mathbf{a}^2\right) \stackrel{\mathord{?}}{\leq} r^2 \frac{\bv^2}{4} H_A^2 \widetilde{D}^2.
\end{equation}
By~\eqref{eq:AmpleKummer3} again, we have
$$\frac{(\mathbf{a},\bv)^2 }{\bv^2}- \mathbf{a}^2\le \frac{(\mathbf{a},\bv)^2 }{\bv^2}\le \frac{\bv^2 }{4},$$
so that~\eqref{eq:AmpleKummer4} is implied by  the inequality
\begin{equation}\label{eq:AmpleKummer5} 
r^2 \frac{\bv^2}{4}\stackrel{\mathord{?}}{\leq} \widetilde{D}^2 \Bigl( r^2\frac{\bv^2}{4} H_A^2 - 1\Bigr) .
\end{equation}
Since $\widetilde{D}^2\geq2$, $r\geq1$, $\bv^2\geq4$, and $H_A^2\geq2$, the right side of~\eqref{eq:AmpleKummer5} is 
$$\ge 2 \Bigl( r^2\frac{\bv^2}{4} H_A^2 - 1\Bigr)\ge r^2\bv^2 - 2= r^2 \frac{\bv^2}{4}+3 r^2 \frac{\bv^2}{4} - 2\ge  r^2 \frac{\bv^2}{4}+3  - 2>r^2\frac{\bv^2}{4},$$
which implies~\eqref{eq:AmpleKummer5}.

To show that $D_{\infty}$ is ample, we only have to show that there are no Mukai vectors $\mathbf{a}$ as above such that $(\mathbf{a},\boldsymbol{\ell})=0$.\
This can be done by a direct computation.\
We use instead a quicker, more geometric, argument.\
The divisor class $D_{\infty}$ is associated with the Donaldson--Ulhenbeck--Yau compactification: it induces the birational morphism
\[
K_{(A,\alpha),H_A}(\bv)\lra K_{(A,\alpha),H_A}^{\mu}(\bv).
\]
To show that $D_{\infty}$ is ample, it is enough to show that all slope-semistable torsion-free sheaves are actually slope-stable vector bundles.

To this end, we first note the equality
\[
H_A\cdot (c H_A + r  B) = 2cd + a.
\]
Hence, since $\gcd(r,2cd+a)=1$, there are no properly slope-semistable torsion-free sheaves.\
The assumption $\bv^2\leq 2r-2$ then guarantees  that all torsion-free sheaves are actually vector bundles: indeed, the Mukai vector $\bv'$ of the double dual would satisfy $(\bv')^2<0$, which is impossible.
\end{proof}

In the special case $r=1$, we obtain the following generalization of Corollary~\ref{prop2}.\
Let $n\geq1$ and set $\bv\coloneqq (1,0,-n-1)$, so that $K$  is isomorphic to the generalized Kummer variety $\Kum_n(A)$.\
Explicitly, we have
\[
\boldsymbol{\ell}=-(0,H_A,0)\qquad\textnormal{and}\qquad \boldsymbol{\delta}\coloneqq -(1,0,n+1).
\]
Then the class $H_n\coloneqq D_{\infty}=\vartheta(\boldsymbol{\ell})$ is big and nef, but not ample.\
We let $\delta\coloneqq \vartheta(\boldsymbol{\delta})$; it is half the class of the  restriction of the divisor on the Hilbert scheme parameterizing nonreduced subschemes. 

\begin{coro}\label{cor:AmpleKummer}
On any generalized Kummer variety $\Kum_n(A)$, the class $a H_n - \delta$ is ample for all real numbers $a>n+1$.
\end{coro}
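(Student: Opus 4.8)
The plan is to obtain Corollary~\ref{cor:AmpleKummer} as the special case $r=1$, $\bv=(1,0,-n-1)$ of Proposition~\ref{prop:AmpleKummer}, and then to reconcile the two statements of the bound. First I would specialize the data of the proposition: with $r=1$ one has $d=H_A^2/2$, the twist is trivial so $B=0$, $a=b=0$, and $\bv^2 = -2\,r\,s - \ldots$; concretely for $\bv=(1,0,-n-1)$ one computes $\bv^2 = 0^2 - 2\cdot 1\cdot(-n-1) = 2(n+1) = 2n+2$, hence $\bv^2/2 = n+1$. The moduli space $K=K_{(A,\alpha),H_A}(\bv)$ is then $\Kum_n(A)$, and the classes $\boldsymbol{\ell}=-(0,H_A,0)$, $\boldsymbol{\delta}=-(1,0,n+1)$ reduce exactly to those displayed just before the corollary, so that $D_u=\vartheta(u\boldsymbol{\ell}-\boldsymbol{\delta})$ equals $u\,H_n-\delta$.

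Next I would apply the ampleness part of Proposition~\ref{prop:AmpleKummer}. That proposition asserts $D_u$ is ample for all $u > r\,\bv^2/2$. Substituting $r=1$ and $\bv^2/2 = n+1$ gives the bound $u>n+1$, which is precisely the claim of the corollary (with $u$ renamed $a$, a real number). Thus the ampleness of $a\,H_n-\delta$ for $a>n+1$ is immediate from the proposition; the only point to check is that the genericity/stability hypotheses of the proposition are met in this special case. Here the condition $\gcd(r,2cd+a)=\gcd(1,\ast)=1$ is automatic, and for $\bv=(1,0,-n-1)$ the moduli space consists of ideal sheaves, all of which are stable, so the relative moduli space is indeed a fine family of smooth projective hyper-K\"ahler manifolds and the divisor class theory of Theorem~\ref{thm:families} applies.

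The one genuine subtlety, and the step I expect to require the most care, is that Proposition~\ref{prop:AmpleKummer} assumes $\bv^2\ge 4$, i.e.\ $n\ge 1$, which matches the corollary's hypothesis $n\ge 1$, but the proposition is stated for integers while the corollary allows arbitrary real $a>n+1$. I would handle this by noting that the ampleness argument in the proof of Proposition~\ref{prop:AmpleKummer} — the chain of inequalities \eqref{eq:AmpleKummer1}--\eqref{eq:AmpleKummer5} bounding the wall-crossing ratio $(\mathbf{a},\boldsymbol{\delta})/(\mathbf{a},\boldsymbol{\ell})$ — uses $u$ only as a real threshold and goes through verbatim for real $u$. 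Concretely, $D_u$ is ample precisely when, for every potential wall class $\mathbf{a}$ (those with $\mathbf{a}^2\ge 0$ and $(\mathbf{a},\bv)\in\{1,\dots,\bv^2/2\}$), one has $(\mathbf{a},\boldsymbol{\delta})/(\mathbf{a},\boldsymbol{\ell})<u$; the supremum of these finitely many real ratios is at most $\bv^2/2=n+1$ by the computation in the proof, so every real $a>n+1$ gives an ample class. Hence the corollary follows directly, and I would simply invoke the proposition and record the specialization of the numerics.
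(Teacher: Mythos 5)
Your proposal is correct and follows essentially the same route as the paper: the corollary is stated there precisely as the specialization $r=1$, $c=0$, $\bv=(1,0,-n-1)$ of Proposition~\ref{prop:AmpleKummer}, with $\bv^2/2=n+1$ yielding the bound $a>n+1$ and the classes $\boldsymbol{\ell}=-(0,H_A,0)$, $\boldsymbol{\delta}=-(1,0,n+1)$ reducing to $H_n$ and $\delta$. The only superfluous step is your concern about real versus integer thresholds: in the proposition $D_u=\vartheta(u\cdot\boldsymbol{\ell}-\boldsymbol{\delta})\in\NS(K)_{\R}$ is already defined for a real parameter $u$, so no extension of the argument is needed.
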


Theorem~\ref{thm:Kum} immediately follows  from Corollary~\ref{cor:AmpleKummer}.

Recall from Remark~\ref{rmk:abelian} that if $n=1$, then $\Kum_1(A)=\Kum(A)$, the notation for $H=H_1$ is coherent with above, and $\delta = \frac 12 \sum_{i=1}^{16} E_i$.\
Hence, the statement is exactly Corollary~\ref{prop2}.\
For $n\geq2$, it is optimal since $\NS(\Kum_n(A))\cong \NS(A)\oplus \Z\cdot \delta$.\
Moreover, if there exists a divisor~$D$ with $D^2=0$ and $D\cdot H_A=1$, we get a Mukai vector $\mathbf{a}=(1,D,0)$ with $\mathbf{a}^2=0$ and $(\mathbf{a},\bv)=n+1$.\
Thus the class $(n+1) H_n - \delta$ is nef but not ample on $\Kum_n(A)$ in that case.

\subsubsection*{K3-type}
Let $r,e\geq1$.\
Let $(S,\alpha,H)$ be a polarized twisted K3 surface, with $H^2=2e$ and $\alpha^r=\id$.\
We let $B\in H^2(S,\Q)$ be a B-field associated with $\alpha$ and set $B_0\coloneqq r B\in H^2(S,\Z)$.\
We also set $a\coloneqq H\cdot B_0\in\Z$ and $b\coloneqq \frac{B_0^2}{2}\in\Z$. 

We further fix integers $c$ and $s$ and consider the Mukai vector  
\[
\bv \coloneqq (r,c H +    B_0, s)\in H^*_{\mathrm{alg}}(S,B,\Z).
\]
We assume  
\[
\bv^2 =2(er^2+ ac+ b- rs)\geq 0\qquad\text{ and }\qquad \gcd(r,2ce+a)=1.
\]

We set $\mathbf{w}\coloneqq (0,0,1)$ and consider the Mukai vectors $\boldsymbol{\ell},\boldsymbol{\delta}\in\bv^\perp\subset H^*_{\mathrm{alg}}(S,B,\Z)$ given by
\[
\boldsymbol{\ell}\coloneqq -(0,r H,2ce+a) \qquad\text{ and }\qquad \boldsymbol{\delta}\coloneqq - r \bv -( \bv^2 ) \mathbf{w} .
\]

Finally, we let $M\coloneqq M_{(S,\alpha),H}(\bv)$ be the moduli space (of dimension $\bv^2+2$) of $H$-Gieseker stable sheaves on $S$ with Mukai vector $\bv$.\
On $M$, the divisor class
\[
D_u \coloneqq  \vartheta\left( u \boldsymbol{\ell} - \boldsymbol{\delta} \right) \in \NS(M)_{\R}
\]
is ample for all $u$ sufficiently large.\
The following result is the analog of Proposition~\ref{prop:AmpleKummer} for K3 surfaces.

\begin{prop}\label{prop:AmpleHilbert}
With the  notation above, the divisor class  $D_u$ on $M$ is ample for all
\[
u> r \sqrt{\frac{(\bv^2)^2}{4}+2\bv^2}.
\]
Moreover, if $\bv^2\leq 2r-4$, the divisor class $D_{\infty}\coloneqq \vartheta(\boldsymbol{\ell})$ is also ample.
\end{prop}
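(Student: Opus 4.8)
The plan is to run the proof of Proposition~\ref{prop:AmpleKummer} essentially verbatim, replacing the abelian wall-crossing input by its K3 counterpart, namely the description of the nef cone of $M$ in terms of the algebraic Mukai lattice from~\cite[Theorem~12.1]{bama2}. Writing a potential destabilizing class as $\mathbf{a}=(\alpha,D,\beta)\in H^*_{\mathrm{alg}}(S,B,\Z)$, the divisor $D_u$ can fail to be ample only if there is such an $\mathbf{a}$ with
\[
\mathbf{a}^2\geq -2,\qquad (\mathbf{a},\bv)\in\Bigl\{1,\dots,\tfrac{\bv^2}{2}\Bigr\},\qquad (\mathbf{a},\boldsymbol{\ell})\cdot(\mathbf{a},\boldsymbol{\delta})\geq 1,
\]
together with $(\mathbf{a},\boldsymbol{\delta})/(\mathbf{a},\boldsymbol{\ell})>u$. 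The only structural change from the abelian case is the constraint $\mathbf{a}^2\geq -2$ in place of $\mathbf{a}^2\geq 0$, reflecting the existence of spherical classes on a K3 surface; as noted below, it is exactly this $-2$ that upgrades the bound $r\frac{\bv^2}{2}$ of Proposition~\ref{prop:AmpleKummer} to $r\sqrt{\frac{(\bv^2)^2}{4}+2\bv^2}$.

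The lattice identities are then identical. Setting $\widetilde{D}:=rD-\alpha(cH+rB)$, which is an integral class (the $B$-field contributions cancel, so that $\widetilde{D}^2\in 2\Z$), one computes exactly as before
\[
(\mathbf{a},\boldsymbol{\ell})=H\cdot\widetilde{D},\qquad (\mathbf{a},\boldsymbol{\delta})^2=\bv^2\widetilde{D}^2+r^2\bigl((\mathbf{a},\bv)^2-\bv^2\mathbf{a}^2\bigr),
\]
so it suffices to show $(\mathbf{a},\boldsymbol{\delta})^2\leq r^2\bigl(\tfrac{(\bv^2)^2}{4}+2\bv^2\bigr)(H\cdot\widetilde{D})^2$. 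When $\widetilde{D}^2\leq 0$ I would bound $(\mathbf{a},\bv)^2-\bv^2\mathbf{a}^2\leq \tfrac{(\bv^2)^2}{4}+2\bv^2$ using $(\mathbf{a},\bv)\leq\tfrac{\bv^2}{2}$ and $\mathbf{a}^2\geq-2$, and conclude from $(H\cdot\widetilde{D})^2\geq 1$ (which holds since $(\mathbf{a},\boldsymbol{\ell})$ is a nonzero integer); this is the binding case, and it is precisely where the new $2\bv^2$ term is forced. When $\widetilde{D}^2\geq 2$ I would first replace $(H\cdot\widetilde{D})^2$ by $H^2\widetilde{D}^2$ via the Hodge Index Theorem and then use the same two estimates, reducing—in complete analogy with~\eqref{eq:AmpleKummer2}--\eqref{eq:AmpleKummer5}—to the numerical inequality $r^2(\bv^2+8)(4e-1)\geq 8$, which holds trivially since $r\geq1$, $\bv^2\geq 0$, and $e\geq1$.

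For the second assertion I would argue geometrically, as in Proposition~\ref{prop:AmpleKummer}: $D_{\infty}=\vartheta(\boldsymbol{\ell})$ is the pullback of the ample class under the Donaldson--Uhlenbeck--Yau morphism $M_{(S,\alpha),H}(\bv)\to M^{\mu}_{(S,\alpha),H}(\bv)$, so it is ample as soon as this morphism is an isomorphism, i.e.\ as soon as every $H$-slope-semistable torsion-free sheaf with Mukai vector $\bv$ is a slope-stable vector bundle. The identity $H\cdot(cH+rB)=2ce+a$ together with $\gcd(r,2ce+a)=1$ rules out proper slope-semistability exactly as before. For local freeness, if $F$ were not reflexive its double dual would have Mukai vector $\bv'=\bv+\ell\,\mathbf{w}$ with $\ell=\mathrm{length}(F^{\vee\vee}/F)\geq1$, whence $(\bv')^2=\bv^2-2r\ell\leq(2r-4)-2r=-4<-2$, which is impossible since no sheaf on a K3 surface has Mukai self-square below $-2$. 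This is where the K3 threshold $\bv^2\leq 2r-4$ replaces the abelian threshold $\bv^2\leq 2r-2$, the gap of $2$ being once more the spherical bound.

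The step requiring the most care is the first: transcribing~\cite[Theorem~12.1]{bama2} and its totally-semistable-wall refinements to the twisted K3 setting so that the relevant destabilizing classes are precisely those with $\mathbf{a}^2\geq-2$ and $(\mathbf{a},\bv)\in\{1,\dots,\bv^2/2\}$, and justifying that $\ell_{\underline{\sigma}}$ sits in the Gieseker chamber. Once this input is granted, the remainder is the same lattice bookkeeping as in Proposition~\ref{prop:AmpleKummer}, with looser numerics; the only genuinely new feature is the uniform appearance of the spherical correction $-2$, which accounts both for the modified ampleness bound and for the modified threshold $2r-4$.
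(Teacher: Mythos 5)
Your proposal is correct and takes essentially the same approach as the paper: the paper's own proof just says to repeat the argument of Proposition~\ref{prop:AmpleKummer}, replacing the abelian-surface wall-crossing input by \cite[Theorem~12.1]{bama2} so that the destabilizing classes satisfy $\mathbf{a}^2\geq-2$ and $(\mathbf{a},\bv)\in\{1,\dots,\bv^2/2\}$, which is exactly your argument. Your worked-out details---the reduction to $r^2(\bv^2+8)(4e-1)\geq 8$ in the case $\widetilde{D}^2\geq 2$, and the double-dual computation $(\bv')^2\leq -4$ explaining the threshold $\bv^2\leq 2r-4$---correctly fill in the steps the paper declares ``analogous.''
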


\begin{proof}
The proof goes along the same lines as the proof of Proposition~\ref{prop:AmpleKummer}.\
The only difference is that, to show that $D_u$ is ample, we use~\cite[Theorem 12.1]{bama2}.\
Thus, we are looking at Mukai vectors $\mathbf{a} = (\alpha, D, \beta)\in H^*_{\mathrm{alg}}(S,B,\Z)$ satisfying
\[
\mathbf{a}^2 \geq -2 \qquad \text{ and }\qquad 1\le (\mathbf{a},\mathbf{v})\le \frac{\bv^2}{2} .
\]
This is the reason why the estimate on $u$ is more complicated, but the rest of the proof is analogous.
\end{proof}

As in the generalized Kummer case, by considering the case  $r=1$ (and so $B=0$) and~$c=0$ in Proposition~\ref{prop:AmpleHilbert}, we immediately obtain   Theorem~\ref{thm:Hilb}.\ 
More precisely, by taking $\bv=(1,0,1-n)$, we have $\boldsymbol{\ell}=-(0,H,0)$, $\boldsymbol{\delta}=-(1,0,n-1)$, $H_n=\vartheta(\boldsymbol{\ell})$, $\delta=\vartheta(\boldsymbol{\delta})$, and thus we obtain that the class $u  H_n-\delta$ is ample for $u>\sqrt{(n-1)^2+4(n-1)}$, as we wanted.

\subsubsection*{Explicit polarized families}

To apply Theorem~\ref{thm:families}, we need to make sure that starting from a polarized family of K3 or abelian surfaces, we can take a twist along the whole family, after taking a finite cover.\
This is the content of the following lemma.

\begin{lemm}\label{lem:TwistedFamily}
Let $T$ be an integral quasi-projective scheme and let $\cS\to T$ be a smooth family of K3 or abelian surfaces.\
Let $t_0\in T$ be a closed point and let $\alpha_{t_0}\in\Br(\cS_{t_0})$ be a Brauer class.\
Then there exist  an integral quasi-projective scheme $T'$, a generically finite projective surjective morphism $\phi\colon T'\twoheadrightarrow T$, a flat family of Azumaya algebras $\cA_{T'}$ on the base change $\cS_{T'}\to T'$, and a closed point $t_0'\in T'$ with $\phi(t_0')=t_0$, such that the class of the Azumaya algebra $\cA_{T'}|_{t_0'}$ is exactly $\alpha_{t_0}$.
\end{lemm}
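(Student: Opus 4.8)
The plan is to spread out $\alpha_0$ first as a \emph{cohomological} Brauer class over a suitable cover of $T$, and only at the very end to realize it by an honest Azumaya algebra. Write $r:=\ord(\alpha_0)$ and $f\colon\cS\to T$, so that under the relative Kummer sequence $\alpha_0$ lifts to a class $\tilde\alpha_0\in H^2_{\text{\'et}}(\cS_0,\mu_r)=(R^2f_*\mu_r)_0$. The final realization will use the theorem of Gabber (after de~Jong): since $\cS_{T'}$ is quasi-projective (the families under consideration being projective over their base), its cohomological Brauer group agrees with its Azumaya Brauer group, and any Azumaya representative is locally free, hence automatically flat over $T'$. This produces $\cA_{T'}$ and settles the flatness requirement, \emph{once} a relative cohomological class restricting to $\alpha_0$ on the special fibre has been constructed.

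First I would exploit the finiteness of $\mu_r$-cohomology. Because $\cS\to T$ is smooth and proper, $R^2f_*\mu_r$ is locally constant with \emph{finite} stalks, so the monodromy representation $\pi_1^{\text{\'et}}(T,0)\to\Aut\bigl(H^2_{\text{\'et}}(\cS_0,\mu_r)\bigr)$ has finite image. Consequently the stabilizer of $\tilde\alpha_0$ has finite index, and the associated connected finite \'etale cover $\tilde T\to T$, equipped with a point $\tilde0$ over $0$, is one over which $\tilde\alpha_0$ is monodromy-invariant. Hence $\tilde\alpha_0$ extends to a global section $s\in H^0(\tilde T,R^2f_*\mu_r)$ with $s(\tilde0)=\tilde\alpha_0$, i.e.\ a flat family of torsion classes along the fibres.

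The heart of the matter is to promote this fibrewise section to an actual class on the total space. I would run the Leray spectral sequence $E_2^{p,q}=H^p(\tilde T,R^qf_*\mu_r)\Rightarrow H^{p+q}_{\text{\'et}}(\cS_{\tilde T},\mu_r)$, whose edge map sends a class on the total space to its fibrewise part; lifting $s$ amounts to killing the differentials $d_2(s)\in H^2(\tilde T,R^1f_*\mu_r)$ and $d_3(s)\in H^3(\tilde T,\mu_r)$. In the K3 case one has $R^1f_*\mu_r=\Pic(\cS_t)[r]=0$, so $d_2$ vanishes automatically and only $d_3$ survives; in the abelian case, where $R^1f_*\mu_r\neq0$, both must be handled. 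These being torsion obstruction classes, I would annihilate them by a further base change combined with a shrinking of $\tilde T$ to a Zariski open $U\ni\tilde0$ followed by passage to a projective compactification (call the resulting base $T'$, with $0'$ over $0$); this is precisely why the conclusion is \emph{generically finite and projective} rather than finite. Once the obstructions vanish, $s$ lifts to $\beta\in H^2_{\text{\'et}}(\cS_{T'},\mu_r)$, whose image in $\Br(\cS_{T'})$ restricts on $\cS_{0'}$ to the Brauer class of $\tilde\alpha_0$, namely $\alpha_0$ (the remaining ambiguity is a class pulled back from the base, which restricts to zero on each fibre).

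I expect the control of these higher differentials over a \emph{projective} base to be the main obstacle: annihilating $d_2$ and $d_3$ after restricting to a small neighborhood of $0$ is routine, but the lemma demands a class defined on \emph{every} fibre of $\cS_{T'}\to T'$ with $T'$ projective over $T$, so the vanishing must be arranged compatibly with a compactification, and the Azumaya representative must remain Azumaya (locally free of the right rank) over the boundary. The K3 case is eased by $H^1(\text{K3},\mu_r)=0$; the abelian case is the delicate one. An alternative that bypasses the spectral sequence is to build $T'$ directly as a component of a relative moduli space (or Lieblich stack) of twisted sheaves of the numerical type of a fixed Azumaya representative $\cA_0$ of $\alpha_0$ on $\cS_0$: such a space is projective over $T$ and generically finite, it carries a universal twisted object whose endomorphism algebra is the desired $\cA_{T'}$, and the flat section $s$ constructed above guarantees that the component through $[\cA_0]$ dominates $T$, yielding surjectivity.
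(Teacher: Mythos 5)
Your primary route (monodromy finiteness, then the Leray spectral sequence, then Gabber--de Jong) founders exactly where you say it does, and the fix you propose does not work. Killing the differentials $d_2(s)\in H^2(\tilde T,R^1f_*\mu_r)$ and $d_3(s)\in H^3(\tilde T,\mu_r)$ after shrinking to a Zariski open $U\ni\tilde 0$ and then ``passing to a projective compactification'' is not a repair: the class $\beta$ (and any Azumaya algebra representing it) is then defined only on $\cS_U$, and nothing makes it extend across the boundary of the compactification; Brauer classes and Azumaya algebras defined over an open part of the base do not automatically extend. Since the lemma requires the flat family $\cA_{T'}$ on \emph{all} of $\cS_{T'}$ with $\phi\colon T'\to T$ projective and surjective (this is what feeds the complete-curve applications), generic existence is not enough, and your proposal supplies no mechanism for existence over every fiber.

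Your fallback is essentially the paper's construction --- a relative moduli space of Azumaya algebras over $T$ and a generic multisection through $[\cA_0]$ --- but the one claim that makes it work is precisely the one you get wrong. The flat section $s$ of $R^2f_*\mu_r$ does \emph{not} guarantee that the component through $[\cA_0]$ dominates $T$: it only says that the cohomological class spreads out as a section of the local system; the point $[\cA_0]$ could still be obstructed, with its component lying over a proper subvariety of $T$, while Azumaya algebras with class $s(t)$ on nearby fibers live on other components (compare line bundles: the relative Picard space of a K3 family has many components that do not dominate the base, even though $R^2f_*\Z$ is a local system). The paper's key input, absent from your proposal, is deformation-theoretic: choose the representative $\cA_0$ of \emph{minimal rank}; then $\cA_0$ is a slope-polystable vector bundle with trivial first Chern class, and such a bundle deforms along every deformation of the K3 or abelian surface (its Mukai vector stays algebraic in the whole family), which is what makes the moduli space dominant onto $T$ near $[\cA_0]$. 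Note also that your ``Lieblich stack of twisted sheaves'' variant is circular: defining twisted sheaves relatively over $T$ presupposes a gerbe/twist on all of $\cS$, which is exactly what the lemma is trying to construct; working with Azumaya algebras --- honest sheaves of algebras, needing no twist --- is what avoids this circularity.
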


\begin{proof}
Let us briefly sketch a proof of this well-known fact.\footnote{The general theory of moduli spaces of polarized twisted K3 surfaces has recently been studied in~\cite{bra}.}\
We consider the relative moduli space $f\colon \cT \to T$ of Azumaya algebras over $T$.\
On the special fiber at $t_0\in T$, we can choose an Azumaya algebra $\cA_{t_0}$ of minimal rank with class $\alpha_{t_0}$.\
Hence, $\cA_{t_0}$ is a slope-polystable vector bundle with trivial first Chern class, so it deforms along over any complex deformation of the K3 or abelian surface.\
This shows that $f$ is dominant in a neighborhood of~$\cA_{t_0}$.\
The scheme~$T'$ is then given by a general multisection of $f$ through $\cA_{t_0}$.
\end{proof}

\begin{exam}\label{ex:PolarizedK3}
To show that $\cF^0_{2e}$ contains a complete curve for $e\in\{18, 32, 36, 50, 54\}$, we can apply Proposition~\ref{prop:AmpleKummer} to the divisor $D_{\infty}$, together with Lemma~\ref{lem:TwistedFamily}.\
Explicitly, we consider a polarized twisted abelian surface $(A,\alpha, H_A)$ with $H_A^2=2d$, and we take $\bv=(r, B_0,0)$, where $B_0\coloneqq rB$ satisfies $a\coloneqq H\cdot B_0=1$ and $b\coloneqq \frac{B_0^2}{2}=2$.\ Then  $\bv^2=4$, the moduli space $K$  is a K3 surface, and  the divisor class $D_{\infty}$ is ample when $r\ge 3$, with self-intersection $2e\coloneqq 4dr^2$ (Proposition~\ref{prop:AmpleKummer}).\ 
Then,
\begin{itemize}
    \item taking $d=1$ and  $r=3$, we obtain $e=18$;
    \item taking $d=1$ and  $r=4$, we obtain $e=32$;
    \item taking $d=2$ and  $r=3$, we obtain $e=36$;
    \item taking $d=1$ and  $r=5$, we obtain $e=50$;
    \item taking $d=3$ and  $r=3$, we obtain $e=54$.
\end{itemize}
\end{exam}

\begin{exam}[Mukai, O'Grady]\label{ex:MukaiGeneralized}
Consider a polarized K3 surface $(S,H)$ of degree $2e$ and a Brauer class $\alpha\in\Br(S)$ of order $r\geq2$.\
We further assume that there is a B-field $B\in H^2(S,\Q)$ with $B_0\coloneqq r B\in H^2(S,\Z)$  associated with $\alpha$ such that $a\coloneqq H\cdot B_0=1$ and $2b\coloneqq B_0^2=2r$.\
If we consider the Mukai vector $\bv\coloneqq (r,B_0,1)$, the moduli space $M\coloneqq M_{(S,\alpha),H}(\bv)$ is a smooth projective K3 surface and the divisor $D_{\infty}$ is ample on $M$ of degree $2r^2e$ (Proposition~\ref{prop:AmpleHilbert}).

Given a polarized K3 surface $(S,H)$ of degree $2e$, we can always find a Brauer class $\alpha\in\Br(S)$ satisfying the above conditions.\
By applying Lemma~\ref{lem:TwistedFamily}, we obtain a diagram of finite morphisms
\[
\xymatrix{
& \cF_{e,r}'\ar[dr]^{g}\ar[dl]_{f} & \\
\cF_{2e}^0 && \cF_{2r^2e}^0,
}
\]
where $f$ is surjective and $g$ is dominant.\
In particular, given a complete subvariety in $\cF_{2e}^0$, this gives us a complete subvariety in $\cF_{2r^2e}^0$, for all $r\geq2$.
The morphism $g$ is not surjective: it misses some irreducible components of the Heegner divisor associated with a class of square~$-2r^2$.

These morphisms and their degrees can be better studied by using lattice theory and period domains.
Indeed, as proved in~\cite[Appendix]{og1}, the morphism $g$ is an open embedding and the morphism $f$ can be extended to quasi-polarized K3 surfaces to give a finite surjective morphism
\[
f\colon \cF_{2r^2e} \longrightarrow \cF_{2e}
\]
still denoted by $f$.\ At the level of moduli spaces, this map can be interpreted as follows.\ For a polarized K3 surface $(S,H)$ in $\cF_{2r^2e}^0$, we can consider the Mukai vector $\bv'=(r,H,re)$, the moduli space $N\coloneqq N_{(S,H),H}(\bv')$ (which is   not fine in general), and the line bundle on~$N$ with class $-\vartheta(1,0,-e)$.\
The degree of the morphism $f$ can also be studied in some cases (see~\cite{og2,kon}).
\end{exam}


\appendix\section{Numerical computations}\label{app}

We list here the various numerical statements that we used in the course of this article.\ The corresponding codes are available at

\url{https://www.imo.universite-paris-saclay.fr/~macri/Kummer_sage.pdf}

\subsection{}\label{st1} {\em All integers $36\le n\le 101$ can be written as the sum of the squares of fifteen positive integers that are all $\le \sqrt{\frac{n}3-3}$.}

\subsection{}\label{st2} {\em All integers $163\le e\le 1215$ or in the set $\{34$, $53$, $79$, $97$, $101$, $103$, $107$, $109$, $113,119$, $125$, $131$, $135$, $137$, $139$, $143$, $145$, $149$, $151$, $155$, $157$, $161\}$
can be written as $$ e=2 a^2 -\sum_{i=1}^{15}a_i^2-1,$$ with $a,a_1,\dots,a_{15}\in\Z$, $a_1\ge \dots\ge a_{15}\ge1$, and $a> a_{1}+a_{2}+a_{3}+a_{4}$.}

\subsection{}\label{st3} {\em All integers $32\le n\le 186$ or in the set $\{
9$, $15$, $16$, $22$, $23$, $24$, $25$, $30 \}$ can be written as $$ n=a^2 -\sum_{i=1}^{15}\binom{a_i}{2},$$ with $a,a_1,\dots,a_{15}\in\Z$, $a_1\ge \dots\ge a_{15}\ge1$, and $a> a_{1}+a_{2}+a_{3}+a_{4}-2$.}

\subsection{}\label{st4}  
{\em All integers  $95\le e\le 159$ or in the set
$\{38$, $57$, $59$, $71$, $73$, $75$, $77$, $79$, $81$, $83$, $85 \}$
can be written as 
$$e= 2a^2 - \sum_{i=1}^{15}a_i^2-\frac14, $$
where $a\in\Z$, $a_1,\dots,a_{15}\in\frac12\Z_{>0} $ and exactly seven of them are not integers, $a_1\ge \dots\ge a_{15}$, and $a>a_{1}+a_{2}+a_{3}+a_{4}$.}

\subsection{}\label{st5}  
{\em All integers $e\in\{
29$, $ 45$,   $ 47$, $ 49$, $ 63$, $ 65$, $ 67$, $ 69$, $87$, $89$, $91$, $93
\}$ can be written as 
$$e= 2  \Bigl(a+\frac12\Bigr)^2- \sum_{i=1}^{15}a_i^2-\frac14,$$
where $a\in\Z$,  $a_1,\dots,a_{15} \in\frac12\Z_{>0} $  and exactly five of them are not  integers, $a_1\ge \dots\ge a_{15}$, and $a\ge a_{1}+a_{2}+a_{3}+a_{4}$.}


\end{document}